\newtheorem{theorem}{Theorem}[section]
\newtheorem{lemma}[theorem]{Lemma}
\newtheorem{definition}{Definition}[section]
\newtheorem{remark}{Remark}
\newtheorem{example}{Example}
\begin{document}
\title
{\LARGE \textbf{Enumeration of maximum matchings of graphs}}

\author{ Tingzeng Wu$^a$\thanks{{Corresponding author.\newline
\emph{E-mail address}: mathtzwu@163.com, zxl2748564443@163.com, lvhz@uestc.edu.cn
}}, Xiaolin Zeng$^a$, Huazhong L\"{u}$^b$\\
{\small $^{a}$School of Mathematics and Statistics, Qinghai Nationalities University, }\\
{\small  Xining, Qinghai 810007, P.R.~China} \\
{\small $^{b}$School of Mathematical Sciences,  University of Electronic Science and Technology of China, }\\
{\small  Chengdu, Sichuan 610054, P.R.~China} }
\date{}

\maketitle
\noindent {\bf Abstract:}\ \
 Counting maximum matchings in a graph is of great interest in statistical mechanics,
 solid-state chemistry, theoretical computer science, mathematics, among other disciplines. However, it is a challengeable problem to explicitly determine the number of maximum matchings of general graphs. In this paper, using Gallai-Edmonds structure theorem,   we derive a computing formula for the number of maximum matching in a graph. According to the formula, we obtain an algorithm to enumerate  maximum matchings of a graph.  In particular, The formula implies that  computing the number of  maximum matchings of a graph is converted to compute the number of perfect matchings of some induced subgraphs of the graph. As an application, we calculate the number of maximum matchings of opt trees. The result extends a conclusion obtained by Heuberger and Wagner[C. Heuberger, S. Wagner, The number of maximum matchings in a tree, Discrete Math. 311 (2011) 2512--2542].

\noindent {\bf Keywords:} Vertex partition; Maximum matching; Perfect matching; Opt trees

\section{Introduction}%

Enumerating maximum matchings is a classical problem in graph theory. This problem has been intensively studied for a long time
by mathematicians and computer scientists\cite{lov}. The matching problem on a graph is equivalent to a physical model of dimers. This was mostly studied on planar graphs (lattices), where there is a beautiful method by Kasteleyn\cite{kas}, which shows how to exactly count dimer arrangements (perfect matchings).
Note that counting perfect matching of graphs  are extensively
examined, see \cite{ciu,dye, linc, wuy,xin,yanf,zhangs}  and the references therein.

Little is known about  the number of maximum matchings for graphs that do not have a perfect
matching.  Valiant\cite{val} showed that counting maximum matching of a graph  is   \#P-complete.   Henning and  Yeo\cite{hen1} derived a tight lower bound on the matching number in a graph with given maximum degree.  D\v{o}sli\'{c} and Zubac\cite{dos} calculated the maximal matchings in joins and corona products of some classes of graphs. G\'{o}rska and  Skupie\'{n}\cite{gor} found the
exponential upper and lower bounds on the maximum number of maximal matchings among  trees of order $n$.
Heuberger and  Wagner\cite{heu} improved the result by G\'{o}rska and  Skupie\'{n} on the number of maximal
matchings. And they determined all extremal trees with maximum number of maximal matching. In this paper, our purpose is to give a computational method for enumerating the maximum matchings of a connected graph.

 The rest of this
paper is organized as follows. In  Section 2, we prove the main results in this paper, and give a computing formula of maximum matching for some special graphs. In Section 3, we point out  an application of the main results that improves a  result by Heuberger and Wagner on the number of maximal matchings.

\section{The number of maximum matchings of a graph}

%

Let $G=(V(G), E(G))$ be a graph with the vertex set $V(G)=\{v_{1}, v_{2},..., v_{n}\}$ and the edge set $E(G) = \{e_{1}, e_{2},..., e_{m}\}$.  The path, cycle, star and complete graph on $n$ vertices are denoted by  $P_{n}$, $C_{n}$, $K_{1,n-1}$ and $K_{n}$, respectively.  For more notations  and terminologies not defined here, see \cite{lov}.

A {\em matching} in a graph  is a set of non-loop edges with no shared endpoints. And a {\em perfect matching} in a graph  is a matching that saturates every vertex. A {\em near-perfect matching} in a graph is a matching that only one vertex is unsaturated.
A {\em maximum matching} is a matching of maximum size among all matchings in the graph. For convenience, the number of maximum matchings of graph $G$ and the number of perfect matchings of $G$ denoted by $M_{\max}(G)$ and $M_{pm}(G)$, respectively.

Let $G$ be a graph with $n$ vertices, if $G-v$ has a perfect matching for every $v\in V(G)$, then $G$ is {\em factor-critical}. Definition \ref{art21} comes from \cite{lov}. The notation of
Definition \ref{art21} will be used throughout this paper.
\begin{definition} \label{art21}
Let $G$ be a graph. Let $D(G)$ be the set of all vertices in $G$
which are not saturated by at least one maximum matching of $G$.
Define $A(G) = \{v \in (V(G)-D(G)) :$ there exist a vertex $u \in D(G)$ with $uv \in E(G)\}$ and
 $C(G)=V(G)-(D(G) \cup A(G))$.
\end{definition}
By  Definition \ref{art21}, it can be known that $D(G)$, $A(G)$ and $C(G)$ is a vertex partition of $V(G)$. With this partition, the Gallai-Edmonds structure theorem is stated as follows.

\begin{theorem}\label{art22}{(Gallai-Edmonds Structure Theorem \cite{lov})}
Let $G$ be a
graph and let $D(G)$, $C(G)$ and $A(G)$ be the vertex-partition defined above.
Then \\
$(i)$ the components of the subgraph induced by $D(G)$ are factor-critical;\\
$(ii)$ the subgraph induced by $C(G)$ has a perfect matching;\\
$(iii)$ if $M$ is any maximum matching of $G$, it contains a near-perfect matching
of each component of $G[D(G)]$, a perfect matching of  $G[C(G)]$ and
matches all vertices of $A(G)$ with vertices in distinct components of $G[D(G)]$;\\
$(iv)$ the bipartite graph obtained from $G$ by deleting the vertices of $C(G)$ and the edges spanned by $A(G)$ and by contracting each component of $G[D(G)]$ to
a single vertex has positive surplus (as viewed from $A(G)$);\\
$(v)$ the size of maximum matching is $\frac{1}{2}(|V(G)|-c(D(G))+|A(G)|)$, where $c(D(G))$ denotes the number of components of the graph spanned by $D(G)$.
\end{theorem}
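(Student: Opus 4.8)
The plan is to derive the five assertions from a single combinatorial core, \emph{Gallai's Lemma}, the five parts being mutually dependent and so proved essentially together. Call a vertex $v$ \emph{inessential} if some maximum matching of $G$ misses it; by construction $D(G)$ is exactly the set of inessential vertices, $A(G)=N(D(G))\setminus D(G)$, and $C(G)=V(G)\setminus(D(G)\cup A(G))$. I would keep two tools at hand throughout: Berge's formula $\nu(G)=\tfrac12\bigl(|V(G)|-\max_{S\subseteq V(G)}(c_o(G-S)-|S|)\bigr)$, with $c_o$ counting odd components, and the elementary observation that the symmetric difference of two matchings is a vertex-disjoint union of paths and even cycles, along which one may re-route a matching without changing its size.

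The heart of the argument is \textbf{Gallai's Lemma}: if $H$ is connected and every vertex of $H$ is inessential in $H$, then $H$ is factor-critical. I would prove it by contradiction. If such an $H$ were not factor-critical its deficiency would exceed $1$, so some maximum matching exposes two vertices; among all pairs left exposed by a common maximum matching, choose $u,w$ at minimum distance. Two co-exposed vertices cannot be adjacent, for otherwise the exposing matching extends; and a shortest-path sliding argument, re-routing the matching at an internal matched vertex, produces a co-exposed pair strictly closer than $u,w$ whenever $\mathrm{dist}(u,w)\ge 3$, forcing $\mathrm{dist}(u,w)=2$. Finally, re-routing at the common neighbour of $u$ and $w$ (which is matched) again yields a strictly closer co-exposed pair, the final contradiction. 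This exchange bookkeeping is the \textbf{main obstacle}: the delicate point is the choice of a \emph{closest} co-exposed pair and the verification that each re-routing genuinely decreases the distance.

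Granting the lemma, I obtain (i): each component $K$ of $G[D(G)]$ is connected and, I claim, internally all-inessential, hence factor-critical by the lemma. For $v\in V(K)$, a maximum matching of $G$ missing $v$ can be re-routed along alternating paths so as to induce a near-perfect matching of $K$ exposing only $v$, which exhibits $v$ as inessential in $K$; making this re-routing rigorous is the one place where (i) leans on the matching pattern and is handled by simultaneous induction. With all components of $G[D(G)]$ odd, $A(G)$ becomes a candidate barrier, and since $D(G)$ sends no edge to $C(G)$ one gets $c_o(G-A(G))\ge c(D(G))$ and hence the lower bound $\mathrm{def}(G)\ge c(D(G))-|A(G)|$.

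The reverse inequality and the remaining parts I would obtain as a package. First the positive-surplus condition (iv): in the bipartite graph $B$ between $A(G)$ and the contracted components of $G[D(G)]$, every nonempty $X\subseteq A(G)$ must have more than $|X|$ neighbouring components, since $|N_B(X)|\le|X|$ would let one, using the factor-criticality from (i), construct a maximum matching exposing a vertex of $X$, contradicting that $A(G)$ consists of essential vertices. Given (iv), Hall's theorem supplies a system of distinct representatives matching $A(G)$ into distinct components of $G[D(G)]$; combining this with near-perfect matchings of those components and a perfect matching of $G[C(G)]$ (whose existence is part (ii), established in tandem) produces a matching exposing exactly $c(D(G))-|A(G)|$ vertices. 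Berge's formula then forces equality, giving the size formula (v), and shows every maximum matching realises this pattern, which is (iii); restricting to $C(G)$ yields the perfect matching (ii). The residual subtlety in (iv) is not the Hall inequality but verifying that the auxiliary matching one builds is genuinely maximum and compatible with (i)--(iii), a consistency check that is the last piece of bookkeeping.
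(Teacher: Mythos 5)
First, a point of reference: the paper offers no proof of this statement at all --- Theorem~\ref{art22} is quoted verbatim from Lov\'asz--Plummer \cite{lov} and used as a black box --- so your proposal can only be measured against the standard textbook argument, which is indeed the route you sketch (Gallai's Lemma plus Berge--Tutte counting). The outline is recognisably the right one, but two steps in your ordering of the deductions conceal genuine gaps rather than mere bookkeeping.

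The first gap is in (i). To apply Gallai's Lemma to a component $K$ of $G[D(G)]$ you must show that every vertex of $K$ is missed by some maximum matching \emph{of $K$}, not of $G$; your justification (``re-route a maximum matching of $G$ missing $v$ so that it induces a near-perfect matching of $K$ exposing only $v$'') is exactly the content of part (iii), which you only derive afterwards, and the appeal to ``simultaneous induction'' does not say what the induction is on or why the inductive step closes. The standard device that fills this hole is the Stability Lemma: for $a\in A(G)$ one has $D(G-a)=D(G)$, $A(G-a)=A(G)\setminus\{a\}$, $C(G-a)=C(G)$ and $\nu(G-a)=\nu(G)-1$. Stripping off $A(G)$ one vertex at a time reduces to the case $A(G)=\emptyset$, where the components of $G[D(G)]$ are components of the whole graph and Gallai's Lemma applies verbatim; without this (or an equivalent) the argument for (i) is circular. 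The second gap is the placement of (iv) before (iii): to argue that $|N_B(X)|\le|X|$ would ``let one construct a maximum matching exposing a vertex of $X$'' you already need to know that every maximum matching sends each vertex of $A(G)$ into a distinct component of $G[D(G)]$ --- and even the weaker Hall condition you invoke to build your candidate optimal matching is not available at that stage. The clean order is the reverse: once (i) gives that each component of $G[D(G)]$ is odd, count that each such component must either contain an $M$-exposed vertex or send an $M$-edge to $A(G)$, so $c(D(G))\le\mathrm{def}(G)+|A(G)|$; the barrier $A(G)$ gives $\mathrm{def}(G)\ge c(D(G))-|A(G)|$; equality forces (v), (iii) and (ii) simultaneously, and (iv) then follows from (iii) together with the fact that every vertex of $D(G)$ is inessential. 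Finally, in Gallai's Lemma itself, ``re-routing at an internal matched vertex produces a strictly closer co-exposed pair'' is not automatic: you must choose the second maximum matching (the one missing the internal vertex $x$) to maximise its overlap with $M$ and then analyse the path component of the symmetric difference containing $u$; you flag this as the main obstacle, but as stated the step would not go through.
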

\begin{remark}\label{re1}
Let $G$ be a graph containing no  perfect matching.  By Theorem \ref{art22}, we know that a maximum matching of $G$  consists of a maximum matching in $G[D(G)]$, a perfect matching in $G[C(G)]$, and a maximum matching in  edge-induced subgraph obtained by all edges  connecting $A(G)$ to $D(G)$. This implies that every edge incident with a vertex of $D(G)$ lies in some maximum matching of $G$, and no edge induced by $A(G)$ or connecting $A(G)$ to $C(G)$ belongs to any maximum matching.
\end{remark}

\begin{theorem}\label{art23}
Let $G$ be a factor-critical graph on $n$ vertices. If $v_{i}\in V(G)$ $(i=1, 2,..., n)$, then the number of maximum matchings of $G$ equals $\sum_{i=1}^{n}M_{pm}(G-v_{i})$.
\end{theorem}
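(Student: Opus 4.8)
The plan is to exploit the fact that in a factor-critical graph every maximum matching is a \emph{near-perfect} matching, and to classify these near-perfect matchings according to the single vertex they leave uncovered.

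First I would record the basic structural facts. Since $G$ is factor-critical, for every $v\in V(G)$ the graph $G-v$ has a perfect matching; hence $G$ itself has a matching covering all but one vertex, which forces $n$ to be odd and the maximum matching size to be $(n-1)/2$. Consequently every maximum matching $M$ of $G$ saturates exactly $n-1$ vertices and leaves precisely one vertex unsaturated.

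The key step is a bijection. Fix $v_{i}\in V(G)$. I claim that the maximum matchings of $G$ leaving $v_{i}$ unsaturated are in one-to-one correspondence with the perfect matchings of $G-v_{i}$. Indeed, a maximum matching $M$ missing $v_{i}$ uses only edges of $G-v_{i}$ and covers every vertex of $G-v_{i}$, so $M$ is a perfect matching of $G-v_{i}$; conversely, any perfect matching of $G-v_{i}$, viewed as a set of edges of $G$, is a matching of size $(n-1)/2$ missing only $v_{i}$, hence a maximum matching of $G$. Thus the number of maximum matchings of $G$ that miss $v_{i}$ equals $M_{pm}(G-v_{i})$.

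Finally I would assemble the count. Because each maximum matching leaves \emph{exactly one} vertex uncovered, the classes ``maximum matchings missing $v_{i}$'', as $v_{i}$ ranges over $V(G)$, partition the set of all maximum matchings of $G$: no maximum matching belongs to two classes and none is omitted. Summing the cardinalities of these classes yields $M_{\max}(G)=\sum_{i=1}^{n}M_{pm}(G-v_{i})$, as required. I do not expect a genuine obstacle here; the only point demanding care is verifying that this partition is exact, that is, that each near-perfect matching determines a unique uncovered vertex, which is immediate from counting saturated vertices.
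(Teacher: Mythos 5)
Your proof is correct and takes essentially the same approach as the paper: both count the maximum matchings of $G$ by partitioning them according to the unique vertex they leave unsaturated and identifying the class missing $v_{i}$ with the perfect matchings of $G-v_{i}$. Your write-up is in fact a bit more careful than the paper's, since you explicitly justify that every maximum matching of a factor-critical graph is near-perfect and that the resulting classification is an exact partition.
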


\begin{proof}
Let $G$ be a factor-critical graph on $n$ vertices, and let the vertices of $G$ be labeled by $v_{i}$ $(i=1, 2,..., n)$. For  a maximum matching in $G$, it either contains vertex $v_{1}$ or not. Thus, the number of maximum matching containing $v_{1}$ equals to the sum the number of perfect matchings of $G-v_{i}$ $(i = 2,..., n)$. And the number of maximum matching excluding $v_{1}$ equals the number of perfect matchings of $G-v_{1}$.
\end{proof}

Let $G$ be a graph with $n$ vertices. $C(G)$, $A(G)$ and $D(G)$ are defined in Definition \ref{art21}. Assume that there exist $r$ components of the subgraph induced by $D(G)$. Let $B_{i}(G)$ ($i=1,2,\ldots,r$) be the set of vertices in the $i$th component, then $D(G)=\bigcup\limits_{i=1}^{r}B_{i}(G)$. Set  the vertices of $A(G)$  as $v_{1}$, $v_{2}$, $...$, and $v_{k}$, respectively. For any $i$, $B_{i}(G)$ is contracted to a vertex $u_{i}$ in $(iv)$ of Theorem \ref{art22}. Let $H=(A,B)$ be the bipartite graph defined in Theorem \ref{art22} $(iv)$. It is easy to see that $A=\{ v_{1}, v_{2},..., v_{k}\}$ and $B=\{ u_{1}, u_{2},..., u_{r}\}$. By Theorem \ref{art22},  we obtain that every vertex of $A$ is saturated by every maximum matching of $G$.

Now we calculate the number of maximum matchings of $H$.  We define the degree of a vertex $v_{j}\in A$ $(j\in\{1,2,...,k\})$ is the number of its neighbors in $B$, denoted by $d(v_{j})$. Set  $v_{x}$ and $v_{y}$ be two vertices of $A$ in $A(G)$ such that $1 \leq x < y \leq k$. Suppose that $v_{x}$ and $v_{y}$ are neighbors of the $z$th ($1\leq z\leq r$) vertex in $B$.  The number of edges incident with the $z$th vertex of $B$ and $v_{x}$ of $A$ in $A(G)$ is denoted by $\mid e_{xy}^{z}\mid$, and the $z$th vertex of $B$ is covered by an edge that incident with $v_{y}$.

\begin{theorem}\label{art24}
Let $G$ be any graph, and let $H$ be a bipartite graph defined as above. Then
\begin{eqnarray}\label{equ23}
M_{\max}(H)=\sum_{{d(v_{k})}}\sum_{\substack{d(v_{s})-\sum\limits_{t=s+1}^{k}\mid e_{st}^{p}\mid\\s\in\{2,3,\cdots,k-1\}} }\biggl(d(v_{1})-\sum_{h=2}^{k} \mid e_{1h}^{q}\mid\biggl),
\end{eqnarray}
where $p,q\in\{ 1,2,\cdots,r\}$, the first sum ranges over all edges incident with the vertex $v_{k}$ and a vertex of $B$, the second sum ranges over all edges incident with a vertex $v_{s}$ $(s=2,3,\cdots,k-1)$ and a vertex of $B$.
\end{theorem}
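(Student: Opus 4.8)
The plan is to read $M_{\max}(H)$ combinatorially and then count the relevant matchings by assigning edges to the vertices of $A$ one at a time. First I would use Theorem \ref{art22}$(iv)$: since $H$ has positive surplus as viewed from $A$, Hall's condition holds with room to spare, so $A$ is matchable and every maximum matching of $H$ has size $k=|A|$ and saturates all of $A$. Hence a maximum matching of $H$ is exactly an assignment matching each $v_j\in A$ to a distinct vertex of $B$ along one of its incident edges, counted with edge multiplicities (contracting a component $B_i(G)$ to $u_i$ may produce parallel edges in $H$). Thus $M_{\max}(H)$ equals the number of such $A$-saturating matchings, and the task reduces to enumerating these.

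To count them I would order the vertices of $A$ as $v_k, v_{k-1},\ldots,v_2,v_1$ and build a matching by selecting an edge for each vertex in this order. The vertex $v_k$ may be matched along any of its incident edges, giving $d(v_k)$ choices; this accounts for the outermost sum. Suppose the edges matching $v_k, v_{k-1},\ldots,v_{s+1}$ have been fixed, occupying certain vertices of $B$. Then $v_s$ may be matched along any of its incident edges whose $B$-endpoint is not yet occupied. The number of its edges that are blocked equals the number of edges from $v_s$ to the already-occupied $B$-vertices: by the definition of $\mid e_{st}^{p}\mid$, if $v_t$ (with $t>s$) was matched to the vertex $u_p$ of $B$, then $\mid e_{st}^{p}\mid$ edges of $v_s$ run to $u_p$ and are unavailable. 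Summing over $t=s+1,\ldots,k$, the number of admissible edges for $v_s$ is $d(v_s)-\sum_{t=s+1}^{k}\mid e_{st}^{p}\mid$, which is precisely the index range of the inner sums over $s\in\{2,3,\ldots,k-1\}$ in (\ref{equ23}).

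Iterating this down to the final vertex $v_1$, once all of $v_k,\ldots,v_2$ have received compatible edges the number of admissible edges at $v_1$ is $d(v_1)-\sum_{h=2}^{k}\mid e_{1h}^{q}\mid$, which is exactly the summand inside the parentheses of (\ref{equ23}). Since every $A$-saturating matching arises exactly once from this procedure — choose an edge for $v_k$, then a compatible edge for $v_{k-1}$, and so on down to $v_1$ — summing the number of final admissible choices at $v_1$ over all partial assignments of $v_k, v_{k-1},\ldots,v_2$ produces formula (\ref{equ23}).

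The main obstacle I anticipate is the bookkeeping of the blocked edges: one must verify that the quantity $\sum_{t=s+1}^{k}\mid e_{st}^{p}\mid$ subtracted at stage $s$ counts each edge from $v_s$ to an occupied $B$-vertex exactly once and with the correct multiplicity, with the index $p$ understood to depend on $t$ and on the current partial assignment. One also needs that distinct sequences of edge choices yield distinct matchings, so that there is neither over- nor under-counting. A secondary point is confirming, via the positive-surplus hypothesis, that the sequential construction never gets stuck, so the iterated sum genuinely enumerates all maximum matchings of $H$ and not merely a proper subset.
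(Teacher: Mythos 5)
Your proof is correct, and it reaches the formula by the same underlying decomposition as the paper (an iterated sum over sequential edge choices at $v_k,\dots,v_2$ with the count of admissible choices at $v_1$ as the innermost factor), but the way you justify that decomposition is genuinely cleaner and different in organization. The paper fixes one particular maximum matching $M$ of $H$ and generates all others by an elaborate replacement procedure (Step 1 perturbs the edge at $v_1$, Step 2 perturbs the edge at $v_2$ and then re-chooses $v_1$'s edge, and so on), accumulating partial counts with ``$-1$'' corrections at each stage to avoid recounting $M$, and finally sums the steps; verifying that this scheme hits every maximum matching exactly once is the delicate part there. You instead first observe, via Theorem \ref{art22}$(iv)$ and Hall's theorem, that the maximum matchings of $H$ are precisely the $A$-saturating matchings, and then count them directly by choosing an edge for $v_k,v_{k-1},\dots,v_1$ in order, with $d(v_s)-\sum_{t=s+1}^{k}\mid e_{st}^{p}\mid$ admissible choices at stage $s$; injectivity and surjectivity of this encoding are immediate, which is what your route buys over the paper's. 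Two small remarks: your worry that the construction might ``get stuck'' is harmless for correctness, since a partial assignment with no admissible continuation contributes an empty sum (i.e.\ zero) to the count, exactly as it should, so the positive-surplus hypothesis is not actually needed at that point; and your caveat that the index $p$ in $\mid e_{st}^{p}\mid$ depends on $t$ and on the current partial assignment is exactly the notational looseness present in the paper's statement as well, so it is right to make that dependence explicit.
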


\begin{proof}
Here we declare that all definitions and notations are defined the same as above. We give a method to enumerate all the maximum matchings of $H$.

By Theorem \ref{art22}, it is easy to find a maximum matching $M$ of $H$ such that every vertex $v_{i}$ and one of its neighbors in $B$ are saturated by one edge in $M$. Based on the maximum matching $M$ of $H$, we construct other maximum matchings of $H$ by the following steps.

{\bf Step 1:} On the basis of the maximum matching $M$, we replace the matching edge of $M$ incident with the vertex $v_1$ by another edge with endpoints $v_{1}$ and a vertex of $B$ not covered by $M$. If there exists no such edge, then we move to the next step. Otherwise, we change it and range over all possible edges. Hence, the number of maximum matchings of these selections is $d(v_{1})-\sum\limits_{h=2}^{k} \mid e_{1h}^{q}\mid-1$, where $q\in\{ 1,2,\cdots,r\}$.

{\bf Step 2:} Based on the previous step, we replace the matching edge of $M$ incident with the vertex $v_2$ by another edge with endpoints $v_{2}$ and a vertex in $B$ not covered by edges of $M$ incident with $v_{j'}$ ($j'=3,4,\cdots,k$). Similarly, we notice that the matching edges of $M$ incident with $v_{j'}$ remain unchanged. If there exists no such an edge satisfying the above condition, we proceed the next step. Otherwise, we further replace the matching edge incident with $v_{1}$ and a vertex of $B$ (not covered by edges of the current matching incident with $v_{j''}, j''=2,3,\cdots,k$). If there exists such an edge, then we repeat the procedure of Step 1. Otherwise, we reselect a matching edge incident with $v_{2}$ and a vertex of $B$ (not covered by edges of the current matching incident with $v_{j''}, j''=3,4,\cdots,k$). So the number of maximum matchings of these choices is $\sum\limits_{{d(v_{2})-\sum\limits_{t=3}^{k}\mid e_{2t}^{p}\mid}-1}(d(v_{1})-\sum\limits_{h=2}^{k} \mid e_{1h}^{q}\mid)$, where $p, q\in\{ 1,2,\cdots,r\}$.

{\bf Step 3:} Based on the previous step, we replace the current matching edge incident with the vertex $v_3$ by another edge with endpoints $v_{3}$ and a vertex in $B$ (not covered by edges of $M$ incident with $v_{j'''} , j'''=4,5,\cdots,k$). Note also that the matching edges of $M$ incident with $v_{j'''}$ remain unchanged.

If there exists no such an edge satisfying the above condition, we continue to replace the current matching edge incident with the vertex $v_4$ by another edge with endpoints $v_{4}$ and a vertex in $B$ (not covered by edges of $M$ incident with $v_{j''''}, j''''=5,6,\cdots,k$). Again, the matching edges of $M$ incident with $v_{j''''}$ remain unchanged.

If such an edge exists, then we replace the current matching edge incident with the vertex $v_2$ by another edge with endpoints $v_{2}$ and a vertex in $B$ (not covered by edges of $M$ incident with $v_{j'}, j'=2,3,\cdots,k$). If no such an edge exists, we need to replace the current matching edge incident with the vertex $v_3$ by another edge with endpoints $v_{3}$ and a vertex in $B$ (not covered by edges of $M$ incident with $v_{j'''}, j'''=4,5,\cdots,k$), and keep the matching edges of $M$ incident with $v_{j'''}$ remain unchanged. And we continue to repeat Step 2 if this kind of edges still exist. So the number of maximum matchings of theses selections is $\sum\limits_{{d(v_{3})-\sum\limits_{t^{'}=4}^{k}\mid e_{3t^{'}}^{o}\mid}-1}\sum\limits_{{d(v_{2})-\sum\limits_{t=3}^{k}\mid e_{2t}^{p}\mid}}(d(v_{1})-\sum\limits_{h=2}^{k} \mid e_{1h}^{q}\mid)$, where $o, p, q\in\{ 1,2,\cdots,r\}$.

Follow the previous steps by analogous reasoning until we reach the matching edge with endpoints $v_{k-1}$ and a vertex in $B$ (not covered by the edge of $M$ incident with $v_{k}$), and keep the edge incident with $v_{k}$ in $M$ unchanged. If such edges exist, then the number of maximum matchings of theses selections is $\sum\limits_{{d(v_{k-1})-\mid e_{(k-1)k}^{o}\mid}-1}\sum\limits_{\substack{d(v_{s})-\sum\limits_{t=s+1}^{k}\mid e_{st}^{p}\mid\\s\in\{2,3,\cdots,k-2\}} }(d(v_{1})-\sum\limits_{h=2}^{k} \mid e_{1h}^{q}\mid)$($o, p, q\in\{ 1,2,\cdots,r\}$). Otherwise, we should reselect a distinct matching edge incident with $v_{k}$ and a vertex of $B$, and repeat the above process. We change it and  range over all matching edges (different from the one in $M$) incident with $v_{k}$ and a vertex of $B$. Therefore, the number of maximum matchings of these choices is $\sum\limits_{{d(v_{k})}-1}\sum\limits_{\substack{d(v_{s})-\sum\limits_{t=s+1}^{k}\mid e_{st}^{p}\mid\\s\in\{2,3,\cdots,k-1\}} }(d(v_{1})-\sum\limits_{h=2}^{k} \mid e_{1h}^{q}\mid)$.

We have enumerated all maximum matchings hereto. Summing up them above, we can derive the result of (\ref{equ23}).

\end{proof}

\begin{remark}
Readers can refer to Example \ref{exa21} for a better understanding of Theorem \ref{art24}.
\end{remark}

In the following, we give a formula to calculate the number of maximum matchings of any graph $G$.

\begin{theorem}\label{art25}
Let $G$ be a graph. Then
\begin{eqnarray}\label{equ24}
M_{\max}(G)=M_{pm}(C(G)) \times\biggl[ \sum_{M_{\max}(H)} \biggl(\prod_{i_{1}=1}^{k} M_{pm}(B_{i_{1}})\times {\prod_{i_{2}=1}^{r-k}\sum_{\alpha=1}^{\beta_{i_{2}}}M_{pm}(B_{i_{2}}-v_{\alpha})}\biggl)\biggl],
\end{eqnarray}
 where the first sum ranges over all cases of the maximum matchings in $H$.
\end{theorem}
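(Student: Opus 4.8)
The plan is to set up an explicit correspondence between the maximum matchings of $G$ and the structured data produced by the Gallai--Edmonds decomposition, and then to count that data by the multiplication principle. First I would fix an arbitrary maximum matching $M$ of $G$ and, invoking Theorem \ref{art22}(iii) together with Remark \ref{re1}, split $M$ into three mutually disjoint pieces: the edges of $M$ lying inside $G[C(G)]$, the edges of $M$ joining $A(G)$ to $D(G)$, and the edges of $M$ lying inside a single component $G[B_i]$. Gallai--Edmonds guarantees that the first piece is a perfect matching of $G[C(G)]$, that the second piece saturates every vertex of $A(G)$ and sends the $k$ vertices of $A(G)$ to $k$ distinct components of $D(G)$, and that in each component the induced matching is near-perfect. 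The decisive structural point is that these three pieces may be chosen \emph{independently}: no edge of a maximum matching joins $C(G)$ to $D(G)\cup A(G)$, and $A(G)$ spans no matching edge, so reassembling any admissible triple again yields a maximum matching of $G$. This reduces the count to a product.

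Next I would count each coordinate. By Theorem \ref{art22}(ii) the number of choices for the $C(G)$-piece is exactly $M_{pm}(C(G))$, the prefactor in \eqref{equ24}. For the $A$--$D$ piece I would pass to the contracted bipartite (multi)graph $H=(A,B)$ of Theorem \ref{art22}(iv): each matching edge $v_jw$ of $G$ with $w\in B_i$ projects to the edge $v_ju_i$ of $H$, and because every maximum matching of $G$ saturates $A(G)$, this projection is a maximum matching of $H$. Conversely, a maximum matching of $H$ records which $k$ of the $r$ components receive an $A$-vertex and, through the edge-multiplicity bookkeeping of Theorem \ref{art24}, through which actual edge---hence through which vertex $w\in B_i$. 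Thus the outer sum $\sum_{M_{\max}(H)}$ in \eqref{equ24} ranges over precisely the admissible $A$--$D$ patterns, and Theorem \ref{art24} (equation \eqref{equ23}) supplies their number.

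It then remains to count the internal matching of each component, which is where factor-criticality does the work. If a component $B_{i_1}$ is one of the $k$ that is matched to $A(G)$, then the matched vertex $w$ is already pinned down by the chosen maximum matching of $H$, and the internal piece must be a perfect matching of $B_{i_1}-w$; since $G[B_{i_1}]$ is factor-critical (Theorem \ref{art22}(i)) this number is positive and equals the factor $M_{pm}(B_{i_1})$ in the first product, read as the perfect-matching count of that component after deleting its $A$-saturated vertex. If instead $B_{i_2}$ is one of the remaining $r-k$ components that no $A$-vertex reaches, then its internal piece is an arbitrary near-perfect matching of $G[B_{i_2}]$, so its free vertex $v_\alpha$ may be any of the $\beta_{i_2}=|B_{i_2}|$ vertices and the number of choices is $\sum_{\alpha=1}^{\beta_{i_2}}M_{pm}(B_{i_2}-v_\alpha)$, the factor in the second product. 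Multiplying the independent choices across all components and over $C(G)$, and summing over the maximum matchings of $H$, yields \eqref{equ24}.

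Finally I would flag the main obstacle. The delicate part is not the arithmetic but the bijection: I must verify that summing over maximum matchings of $H$ neither double-counts nor omits any $A$--$D$ pattern, and in particular that the accounting of Theorem \ref{art24} correctly separates the two roles a component can play---being matched to $A$, with a \emph{forced} deleted vertex, versus being left free, with a \emph{chosen} deleted vertex ranging over all $\beta_{i_2}$ possibilities. Keeping the partition of the $r$ components into the $k$ matched ones and the $r-k$ free ones tied to each individual maximum matching of $H$, rather than fixing it globally, is the subtle bookkeeping on which the correctness of \eqref{equ24} ultimately rests.
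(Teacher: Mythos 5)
Your proposal is correct and follows essentially the same route as the paper: decompose a maximum matching via Theorem \ref{art22}(iii) into independent pieces on $C(G)$, on the $A$--$D$ edges (encoded by a maximum matching of $H$), and inside each component of $D(G)$, then count by the multiplication principle. Your explicit reading of the factor $M_{pm}(B_{i_1})$ as the number of perfect matchings of the component \emph{after deleting} its $A$-saturated vertex (forced by the chosen matching of $H$) is exactly the intended meaning in the paper's proof, and your check that reassembled triples have the maximum size required by Theorem \ref{art22}(v) makes the independence step more explicit than the paper does.
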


\begin{proof}
Clearly, $C(G)$ has perfect matchings and the number of perfect matchings of $C(G)$ is $M_{pm}(C(G))$. We know that each maximum matching must cover all vertices of $A(G)$. By  Theorem \ref{art22}, a maximum matching $M'$ of $H$ corresponds to $\prod\limits_{i_{1}=1}^{k}M_{pm}(B_{i_{1}})\times {\prod\limits_{i_{2}=1}^{r-k}\sum\limits_{\alpha=1}^{\beta_{i_{2}}}M_{pm}(B_{i_{2}}-v_{\alpha})}$ maximum matchings of subgraph $G'$  of $G$ induced by $A(G)$ and $D(G)$, where  $B_{i_{1}}$ is a subgraph induced by $B_{i_{1}}(G)$, and any vertex of $B_{i_{1}}$ is  matched by $M'$, $B_{i_{2}}$ is a subgraph induced by $B_{i_{2}}(G)$, no vertex of $B_{i_{2}}(G)$ is covered by $M'$, and the number of vertices in $B_{i_{2}}(G)$ is $\beta_{i_{2}}$. Thus $ M_{\max}(G^{'})=\sum\limits_{M_{\max}(H)} \biggl(\prod\limits_{i_{1}=1}^{k} M_{pm}(B_{i_{1}})\times {\prod\limits_{i_{2}=1}^{r-k}\sum\limits_{\alpha=1}^{\beta_{i_{2}}}M_{pm}(B_{i_{2}}-v_{\alpha})}\biggl)$, where the first sum ranges over all cases of the maximum matchings in $H$. By  Theorem \ref{art22}$(iii)$, we can get the number of maximum matchings of $G$, and  the formula (\ref{equ24}) follows.

Thus the theorem has been proved.
\end{proof}
\begin{example}\label{exa21}
 Calculate the number of maximum matchings of the graph $G$ in Figure \ref{fig5} below.
\begin{figure}[htbp]
\begin{center}
\includegraphics[scale=0.5]{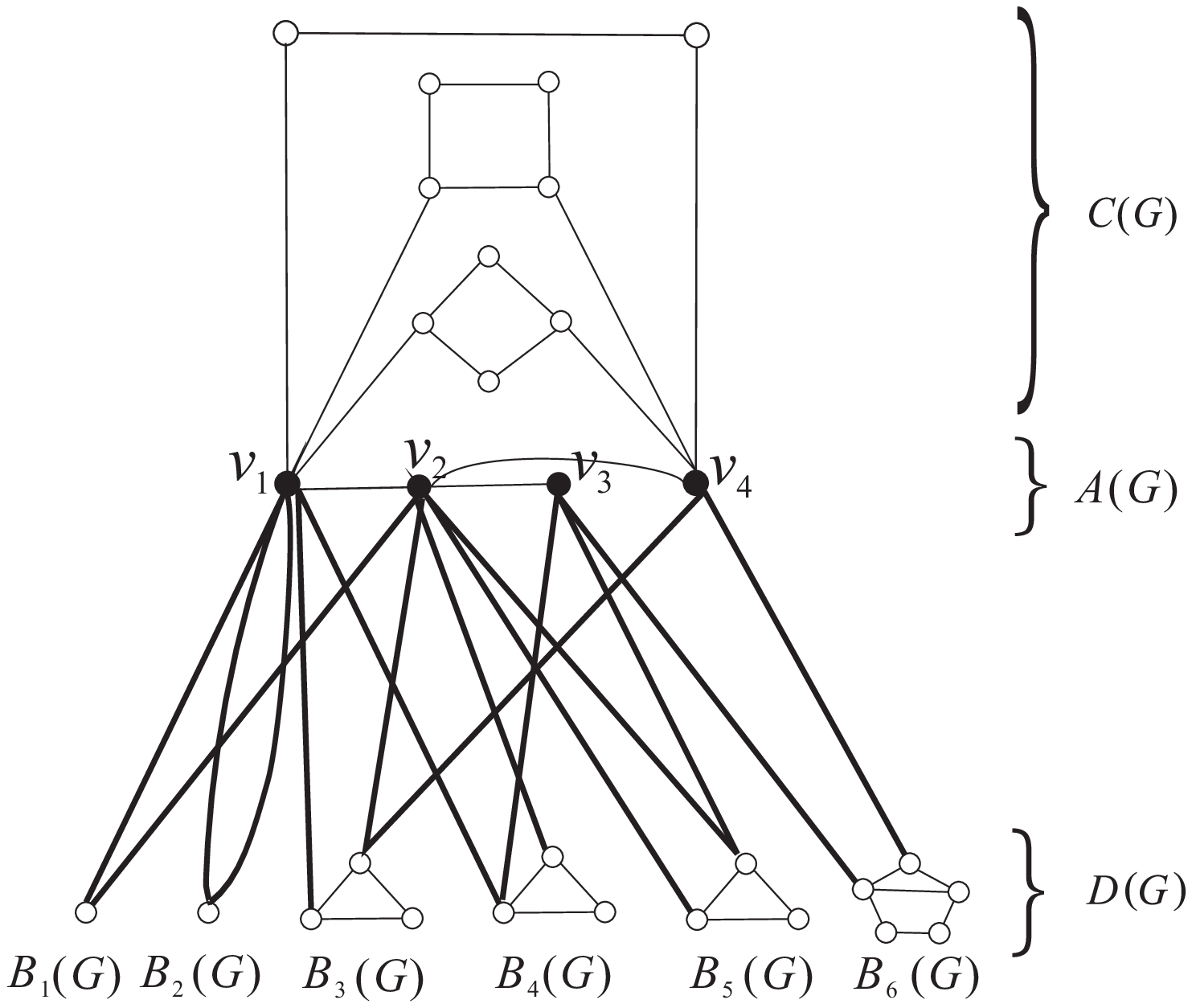}
\caption{\label{fig5}\small
{Graph $G$.}}
\end{center}
\end{figure}
\end{example}
By Theorem \ref{art22}, it is easy to obtain  a vertex partition of $V(G)=C(G)\cup A(G)\cup D(G)$, see Figure \ref{fig5}.
Let the $i$th component of $D(G)$ be $B_{i}(G)(i\in\{1,2,3,4,5,6\})$, and let $A(G)=\{v_{1},v_{2},v_{3},v_{4}\}$.
In addition, let $m_{ji}$ be the edge with endpoints $v_{j}$ and a vertex in $B_{i}(G)$, where $j\in\{1,2,3,4\}$ and $i\in\{1,2,3,4,5,6\}$.

Obviously, $C(G)$ has perfect matchings and $M_{pm}(C(G))=4$. In the following, we enumerate all maximum matchings of $H$.

By Theorem \ref{art24}, we can choose a maximum matching $M=\{m_{12},m_{21},m_{31}, m_{41}\}$ in $H$.

Step 1. Based on the $M$, we replace $m_{12}$ by another edge in $H$ incident with $v_{1}$ and a vertex of $B_{i}(i\in\{1,2,3,4,5,6\})$, and the vertex is not covered by $M$. By formula (\ref{equ23}), we get the number of maximum matchings  selected is $d(v_{1})-\mid e_{12}^{1}\mid-\mid e_{13}^{4}\mid-\mid e_{14}^{3}\mid-1=1$.

Step 2. On the basis of Step 1, we replace $m_{21}$ by an edge in $H$ incident with $v_{2}$ and a vertex of $B_{i}$, which is not covered by edges of $M$ incident with $v_{3}$ and $v_{4}$. By calculating, we have $d(v_{2})-\mid e_{23}^{4}\mid-\mid e_{24}^{3}\mid-1=2$, i.e. we can choose the edges $m_{24}$ or $m_{25}$ incident with $v_{2}$. Then we further select a matching edge incident with $v_{1}$. Hence,
\begin{eqnarray*}
\sum\limits_{{d(v_{2})-\sum\limits_{t=3}^{4}\mid e_{2t}^{p}\mid}-1}(d(v_{1})-\sum\limits_{h=2}^{4} \mid e_{1h}^{q}\mid)&=&\sum\limits_{2}(d(v_{1})-\sum\limits_{h=2}^{4} \mid e_{1h}^{q}\mid)\\
&=&(d(v_{1})- \mid e_{13}^{4}\mid-\mid e_{14}^{3}\mid)+(d(v_{1})- \mid e_{13}^{4}\mid-\mid e_{14}^{3}\mid)\\
&=&3+3=6.
\end{eqnarray*}

Step 3. On the basis of Step 2, we replace the edge incident with $v_{3}$ in $M$ by an edge incident with $v_{3}$ and a vertex of $B_{i}$ that is not covered by the edge of $M$ incident with $v_{4}$. By calculating, we have $d(v_{3})-1=2$, i.e. we can also select $m_{32}$ or $m_{33}$. Then we choose a matching edge in $H$ incident with $v_{2}$ and a vertex of $B_{i}$, so there are $(d(v_{2})-\mid e_{23}^{5}\mid-\mid e_{24}^{3}\mid)+(d(v_{2})-\mid e_{24}^{3})=2+4=6$ choices.

That is, if we select the matching edge $m_{32}$, then we can further choose $m_{21}$ or $m_{23}$ incident with $v_{2}$. If we select $m_{33}$, then we can further choose $m_{21}$, $m_{23}$, $m_{24}$, $m_{25}$ incident with $v_{2}$. Now, we are ready to choose the matching edge incident with $v_{1}$. By formula (\ref{equ23}), we get the number of maximum matchings that are selected is
\begin{eqnarray*}
\sum\limits_{d(v_{3})-1}\sum\limits_{{d(v_{2})-\sum\limits_{t=3}^{4}\mid e_{2t}^{p}\mid}}(d(v_{1})-\sum\limits_{h=2}^{4} \mid e_{1h}^{q}\mid)&=&\sum\limits_{2}\sum\limits_{{d(v_{2})-\sum\limits_{t=3}^{4}\mid e_{2t}^{p}\mid}}(d(v_{1})-\sum\limits_{h=2}^{4} \mid e_{1h}^{q}\mid)\\
&=&(d(v_{1})-\mid e_{12}^{1}\mid- \mid e_{14}^{3}\mid)+(d(v_{1})- \mid e_{12}^{4}\mid-\mid e_{14}^{3}\mid)\\
&&+(d(v_{1})- \mid e_{12}^{1}\mid-\mid e_{14}^{3}\mid)+(d(v_{1})- \mid e_{12}^{4}\mid-\mid e_{14}^{3}\mid)\\
&&+(d(v_{1})-\mid e_{14}^{3}\mid)+(d(v_{1})-\mid e_{14}^{3}\mid)\\
&=&3+3+3+3+4+4=20.
\end{eqnarray*}

Next reselect a matching edge incident with $v_{4}$ and repeat the above process. Since $d(v_{4})-1=1$, we have exactly one choice to select $m_{42}$ incident with $v_{4}$. Thus, we have a new initial maximum matching $M^{'}=\{m_{12}, m_{21}, m_{31}, m_{42}\}$ in $H$.

Step 1. On the basis of $M^{'}$, we replace an edge incident with $v_{1}$ in $M^{'}$ by a matching edge in $H$ incident with $v_{1}$ and a vertex of $B_{i}$ not covered by $M^{'}$. Then the number of maximum matchings of the selections is $d(v_{1})-\mid e_{12}^{1}\mid-\mid e_{13}^{4}\mid-1=2$.

Step 2. Based on the the Step 1, we replace an edge incident with $v_{2}$ in $M^{'}$ by a matching edge in $H$ incident with $v_{2}$ and a vertex of $B_{i}$ not covered by edges incident with $v_{3}$ and $v_{4}$ in $M^{'}$.
By calculating, we have $d(v_{2})-\mid e_{23}^{4}\mid-1=3$. It means that we can choose edges $m_{22}$, $m_{24}$ or $m_{25}$ incident with $v_{2}$. Then we further select a matching edge incident with $v_{1}$. By formula (\ref{equ23}), we have
\begin{eqnarray*}
\sum\limits_{{d(v_{2})-\sum\limits_{t=3}^{4}\mid e_{2t}^{p}\mid}-1}(d(v_{1})-\sum\limits_{h=2}^{4} \mid e_{1h}^{q}\mid)&=&\sum\limits_{3}(d(v_{1})-\sum\limits_{h=2}^{k} \mid e_{1h}^{q}\mid)\\
&=&(d(v_{1})- \mid e_{12}^{3}\mid-\mid e_{13}^{4}\mid)+(d(v_{1})-\mid e_{13}^{4}\mid)+\\
&&(d(v_{1})-\mid e_{13}^{4}\mid)\\
&=&3+4+4=11.
\end{eqnarray*}

Step 3. On the basis of Step 2, we replace a matching edge in $H$ incident with $v_{3}$ and a vertex of $B_{i}$ not covered by the edge of $M^{'}$ incident with $v_{4}$. So we have $d(v_{3})-\mid e_{34}^{6}\mid-1=1$ choice, i.e. we can select the edge $m_{32}$ at this time.

Then we further choose a matching edge in $H$ incident with $v_{2}$ and a vertex of $B_{i}$. By calculating, we have $d(v_{2})-\mid e_{23}^{5}\mid=3$ choices, i.e. we can select the edges $m_{21}$, $m_{22}$ or $m_{23}$. Finally, we choose a matching edge in $H$ incident with $v_{1}$ and a vertex of $B_{i}$. By formula (\ref{equ23}), we have
\begin{eqnarray*}
\sum\limits_{d(v_{3})-\mid e_{34}^{6}\mid-1}\sum\limits_{{d(v_{2})-\sum\limits_{t=3}^{4}\mid e_{2t}^{p}\mid}}(d(v_{1})-\sum\limits_{h=2}^{4} \mid e_{1h}^{q}\mid)&=&\sum\limits_{3}(d(v_{1})-\sum\limits_{h=2}^{4} \mid e_{1h}^{q}\mid)\\
&=&(d(v_{1})-\mid e_{12}^{1}\mid )+(d(v_{1})- \mid e_{12}^{3}\mid)\\
&&+(d(v_{1})- \mid e_{13}^{4}\mid)\\
&=&4+4+4=12.
\end{eqnarray*}

Summing up all the maximum matching enumerated above, it follows that the number of maximum matchings in $H$ is 54.

Next, we calculate the number of maximum matchings of the subgraph $G'$  of $G$ induced by $A(G)$ and $D(G)$. By Theorem  \ref{art25} and the selection of the maximum matching in $H$, we have
  \begin{eqnarray*}
M_{\max}(G')
&=& \sum\limits_{54} (\prod\limits_{i_{1}=1}^{4} M_{pm}(B_{i_{1}})\times {\prod_{i_{2}=1}^{2}\sum\limits_{\alpha=1}^{\beta_{i_{2}}}M_{pm}(B_{i_{2}}-v_{\alpha})})\\
&=&370,
 \end{eqnarray*}
where $\beta_{i_{2}}\in\{1,3,5\}$, the first sum ranges over all maximum matchings in $H$. Therefore, the number of maximum matchings of $G$ is $M_{\max}(G)=4 \times370=1480$.

Base on arguments as above, we present an algorithm to calculate the number of maximum matchings of $G$.

\noindent\rule{\textwidth}{0.5pt}
{\bf Algorithm } Calculating the number of maximum matchings of $G$.

{\bf Step 1: }If the number of vertices in graph $G$ is 0 or 1, then output $M_ {max}(G)=1$, stop.

{\bf Step 2: }According to Edmonds Blossom Algorithm \cite{lov}, find a maximum matching $M$. Using $M$,  we can obtain the vertex partition $C(G)$, $A(G)$ and $D(G)$ of $V(G)$.

{\bf Step 3: } By Formula (\ref{equ24}), we get $M_{\max}(G)$. Output $M_{\max}(G)$.

\noindent\rule{\textwidth}{0.5pt}
\begin{remark}
By Theorem \ref{art25}, we know that computing the number of maximum matching of a graph $G$ is converted to compute $M_{pm}(G[C(G)])$ and $M_{pm}(G[B_{i}(G)-v])$.
\end{remark}

\section{An application}

Heuberger and  Wagner\cite{heu} characterised that if there is a tree on $n\geq 4$ and $n\notin\{6,10,13,20,\\34\}$, then it has a tree $T_{n}^{*}$ that it maximises $M_{\max}(T)$ over all trees of the same order. And they characterized the structure of these trees. A natural problem is how to compute exact values of the number of maximum matchings of these trees? We will give the solution of the problem in this section. For convenience, we use the same definitions and symbols as Ref.\cite{heu}. Heuberger and  Wagner defined the induced subgraph $L$(leaf), $F$, $C^{k_{j}}$, $C_{*}^{k_{j}}$, $C^{k_{j}}L$ and $ C^{k_{j}}F(k\geq1, j\in N)$ from $T_{n}^{*}$, see Figure \ref{fig2}(a). Based on these symbols,   we also need to define some new symbols as follows. Let $G_{1}$ and $G_{2}$ be vertex-disjoint graphs, and graph $G_{1}G_{2}$ obtained from $G_{1}$ and $G_{2}$ by identifying a vertex $u$ of $G_{1}$ with a vertex $v$ of $G_{2}$. Then

(1) $C_{k_{j}}P_{3}$ is derived from $C_{k_{j}}$ and $P_{3}$ by identifying a vertex $u$ of $C_{k_{j}}$ with a vertex $v$ of $P_{3}$;

(2) $C_{k_{j}}F-L$ is obtained from $C_{k_{j}}F$ which omit a vertex $v$;

(3) $C_{k_{j}}FL$ is gained by identifying a vertex $u$ of $C_{k_{j}}F$ with  $L$. \\
where $k\geq1$ and $j\in\{1,2,. . . \}$, see Figure \ref{fig2}(b).
\begin{figure}[htbp]
   \centering
   \subfigure[]{
       \begin{minipage}[t]{0.5\linewidth}
          \centering
          \includegraphics[scale=0.5]{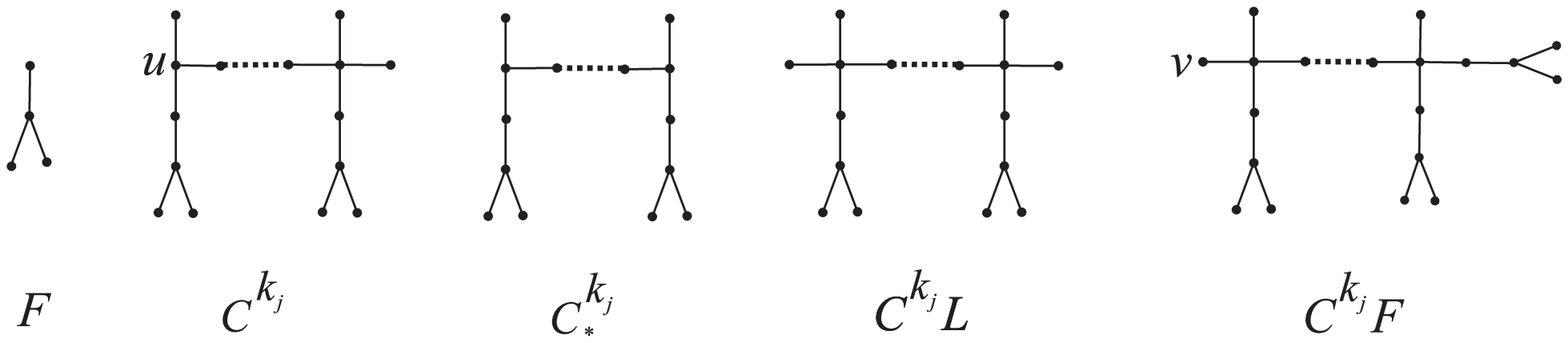}\\

     \end{minipage}
     }

     \subfigure[]{
       \begin{minipage}[t]{0.5\linewidth}
          \centering
          \includegraphics[scale=0.5]{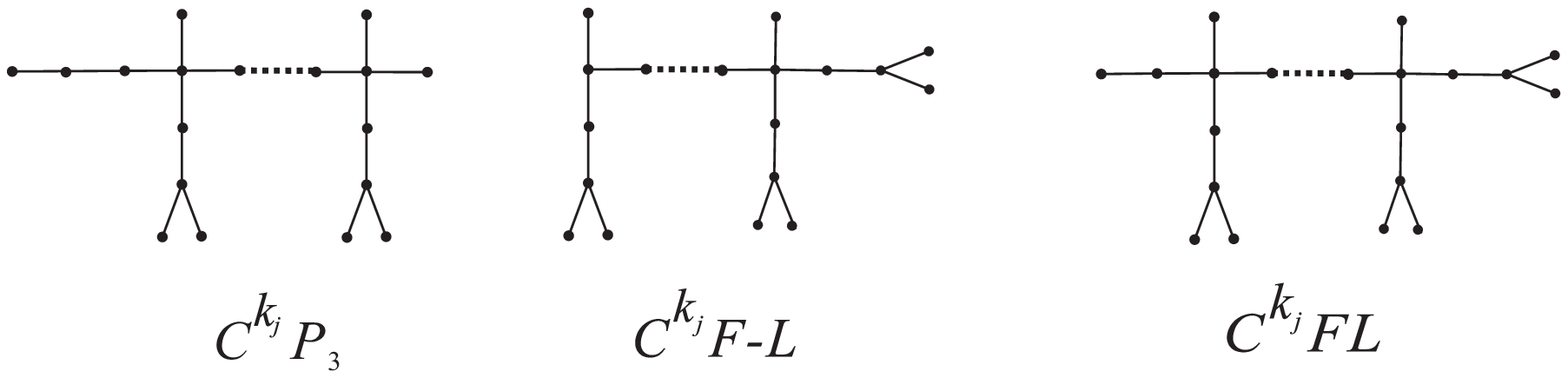}\\
          \vspace{0.02cm}

     \end{minipage}
     }
     \centering
     \caption{The structure of some induced graphs of $T_{n}^{*}$.}
     \vspace{-0.2cm}
     \label{fig2}
\end{figure}

Heuberger and  Wagner\cite{heu} discussed  which tree has the number of maximum matchings in all trees. And they obtain the following result.
\begin{lemma}\label{art31}\cite{heu}
 Let $n\geq 4$ and $n\notin\{6,10,13,20,34\}$. There is a tree $T_{n}^{*}$ of order $n$.

 (1) If $n\equiv1(mod~7)$, then $T_{n}^{*}=C^{(n-1)/7}L$.

 (2) If $n\equiv2(mod~7)$, then $T_{n}^{*}$ is shown in Figure \ref{fig3}(a) and (b), where
 \begin{equation*}
k_{0}=max\{0, \lfloor\frac{n-37}{35}\rfloor\},
k_{j}=
\begin{cases}
\lfloor\frac{n-2+7j}{35}\rfloor & \text{if $n\geq 37$};\\
\lfloor\frac{n-9+7j}{35}\rfloor & \text{if $n\leq 30$}.
\end{cases}
 \end{equation*}
and $j\in \{1,2,3,4\}$.

(3) If $n\equiv3(mod~7)$, then $T_{n}^{*}$ is shown in Figure \ref{fig3}(c), where $k_{j}=\lfloor \frac{n-17+7j}{28}\rfloor, j\in\{0,1,2,3\}$.

(4) If $n\equiv4(mod~7)$, then $T_{n}^{*}=C^{(n-4)/7}F$.

(5) If $n\equiv5(mod~7)$, then $T_{n}^{*}$ is shown in Figure \ref{fig3}(e), where  $k_{j}=\lfloor \frac{n-5+7j}{21}\rfloor, j\in\{0,1,2\}$.

(6) If $n\equiv6(mod~7)$, then $T_{n}^{*}$ is shown in Figure \ref{fig3}(f), where $k_{j}=\lfloor\frac{n-27+7j}{49}\rfloor, 0\leq j\leq 6$.

(7) If $n\equiv0(mod~7)$, then $T_{n}^{*}$ is shown in Figure \ref{fig3}(d), where $k=\frac{n-7}{7}$.
\end{lemma}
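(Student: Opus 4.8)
The plan is to regard Lemma~\ref{art31} as an extremal characterisation and to prove it by a rooted counting recursion combined with a local exchange (grafting) argument. First I would root an arbitrary tree $T$ of order $n$ at a leaf and attach to each vertex $v$ a constant-size vector of data describing the subtree $T_{v}$ hanging below $v$: the matching number $\nu(T_{v})$ together with the counts $s_{v}$ and $u_{v}$ of maximum (and, where relevant, near-maximum) matchings of $T_{v}$ according to whether $v$ is saturated or unsaturated. As children are merged into a common parent, these vectors obey a multilinear convolution recursion, and $M_{\max}(T)$ is recovered from the root vector at the end. This is exactly the local information that the Gallai--Edmonds formula of Theorem~\ref{art25} evaluates, so the global count is turned into a transfer-type product along the tree.

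Next I would prove a \emph{replacement lemma}: if two rooted gadgets $X$ and $Y$ (induced rooted subtrees such as the blocks $L$, $F$, $C^{k_{j}}$, $C_{*}^{k_{j}}$, $C^{k_{j}}L$ and $C^{k_{j}}F$ of Figure~\ref{fig2}) have the same order but $X$ dominates $Y$ in the natural partial order on the pair $(s,u)$ relative to $\nu$, then substituting any pendant copy of $Y$ by $X$ does not decrease $M_{\max}(T)$, with strict increase unless $T$ is already built from dominant blocks. Monotonicity holds because the merge recursion is order preserving and multilinear in each child's vector. Iterating forces an extremal tree to be assembled from copies of a single optimal repeating block together with boundary caps, and a finite comparison of the per-vertex growth of the competing blocks shows the optimum to have exactly seven vertices, which is the origin of the modulus $7$ in the statement.

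With the optimal block fixed, the residue analysis becomes bookkeeping. For each class $n \equiv i \pmod 7$ I would write $n = 7q + i$, lay down $q$ (or $q-1$) copies of the seven-vertex block, and determine the boundary cap that maximises the truncated end; cases $(1)$ and $(4)$ give the simple chains $C^{(n-1)/7}L$ and $C^{(n-4)/7}F$, while the branched configurations of Figure~\ref{fig3} absorb the remaining vertices in the more delicate residues. The floor expressions for $k_{0},\dots,k_{4}$ and $k=(n-7)/7$ then arise from splitting the leftover vertices among the branches as evenly as possible: the count factorises as a product over branches, each factor is log-concave in the branch length, so the product under a fixed total is maximised at the near-equal (floor) distribution.

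The main obstacle is the optimisation underlying the replacement lemma. One must show not merely that the chosen seven-vertex block beats each competitor individually, but that no global rearrangement---unbalanced caps, or longer aperiodic patterns---can do better, which requires controlling the lower-order boundary contribution uniformly and simultaneously excluding all near-optimal rival blocks. The finitely many exceptional orders $n \in \{6,10,13,20,34\}$ are exactly those for which the boundary correction outweighs the periodic gain; these I would settle by directly evaluating $M_{\max}$ through Theorem~\ref{art25} on the short finite list of candidate trees of each such order.
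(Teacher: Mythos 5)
The paper does not prove Lemma~\ref{art31} at all: it is imported verbatim from Heuberger and Wagner \cite{heu}, so there is no in-paper argument to compare yours against. Judged on its own, your sketch does identify the architecture of the actual proof in \cite{heu} --- a rooted transfer recursion carrying saturated/unsaturated matching counts, local exchange (grafting) arguments, emergence of a seven-vertex repeating block, and separate treatment of small exceptional orders --- but as written it has genuine gaps at exactly the points where the theorem's content lives.

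First, your replacement lemma presupposes that competing rooted gadgets of equal order are comparable in ``the natural partial order on $(s,u)$''; in general they are not (one gadget may win on saturated-root counts and lose on unsaturated-root counts, or the two may realize different matching numbers, in which case a substitution can change $\nu(T)$ and the multilinear monotonicity argument says nothing). Handling this requires the careful ratio analysis and extensive case distinctions that occupy most of \cite{heu}, and you acknowledge but do not close the further gap of excluding aperiodic global rearrangements. Second, the claim that the floor expressions for $k_{0},\dots,k_{4}$ arise because ``each factor is log-concave in the branch length, so the product \ldots is maximised at the near-equal (floor) distribution'' is asserted, not proved: the branch counts satisfy recurrences such as $a_{k}=11a_{k-1}-9a_{k-2}$, and one must establish the relevant ratio monotonicity and the cross-comparisons between distinct branch types before the balancing conclusion follows; this is precisely where the specific formulas for the $k_{j}$ (and the distinction between the $n\ge 37$ and $n\le 30$ regimes in case (2)) come from. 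Third, settling $n\in\{6,10,13,20,34\}$ ``by directly evaluating $M_{\max}$ on the short finite list of candidate trees'' is not viable as stated --- the number of trees of order $34$ is in the millions --- so the exceptional orders must instead fall out of the general structural reduction, as they do in \cite{heu}. In short: right roadmap, but the hard steps are named rather than carried out.
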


On the basis of the outline graph of trees $T_{n}^{*}$, we give specific structures of $T_{n}^{*}$ when $n\equiv 0(mod~7)$, $n\equiv 2(mod~7)$ and $n\geq 9$, $n\equiv 3(mod~7)$ and $n\geq 17$, $n\equiv 5(mod~7)$ and $n\geq 12$, $n\equiv 6(mod~7)$ and $n\geq 27, n\neq 34$. See Figure \ref{fig3}.
\begin{figure}[htbp]
   \centering
   \subfigure[$n\equiv2(mod7)$, $37\leq n\leq65$]{
       \begin{minipage}[t]{0.28\linewidth}
          \centering
          \includegraphics[scale=0.41]{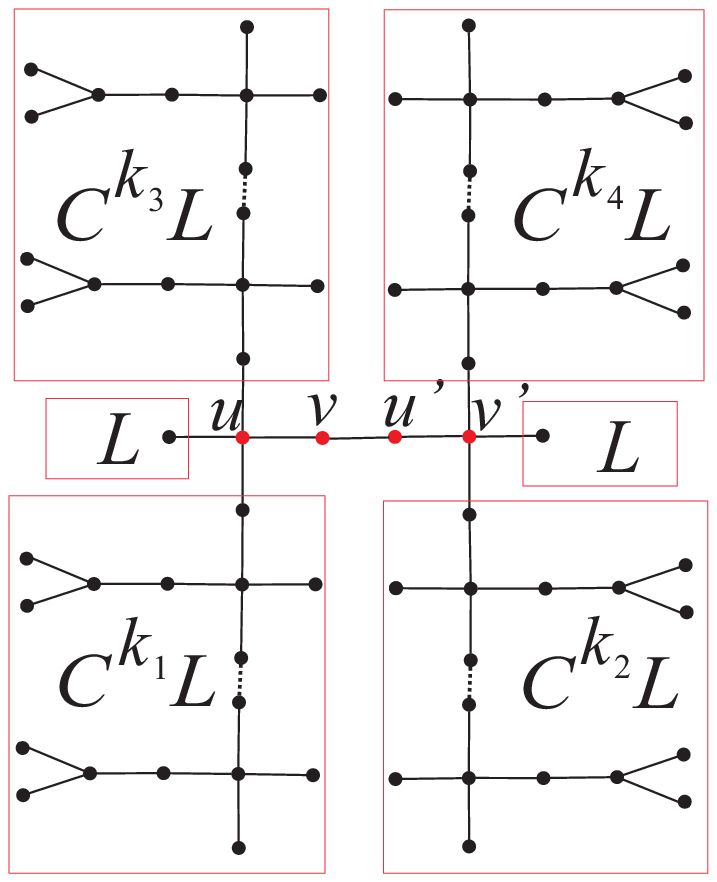}\\

     \end{minipage}
     }
     \subfigure[$n\equiv2(mod7)$ and $n\geq 65$]{
       \begin{minipage}[t]{0.28\linewidth}
          \centering
          \includegraphics[scale=0.41]{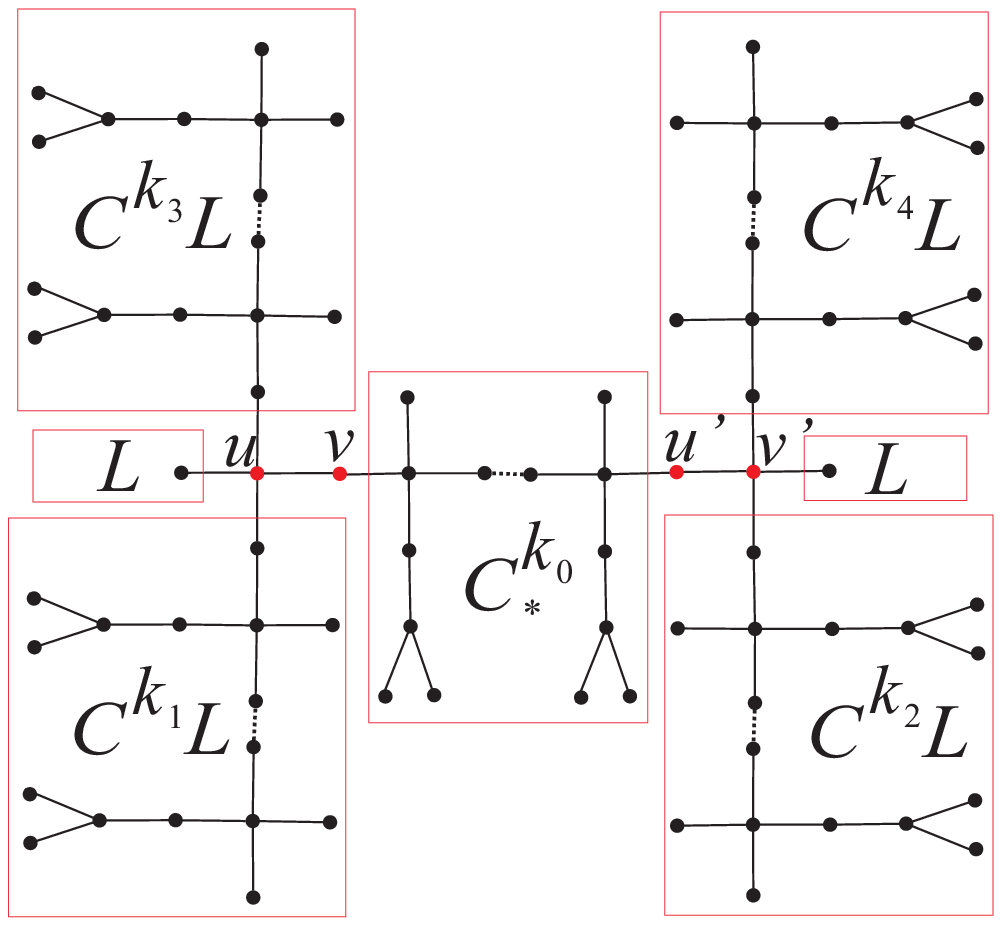}\\

     \end{minipage}
     }
     \subfigure[$n\equiv3(mod7)$ and $n\geq17$]{
       \begin{minipage}[t]{0.3\linewidth}
          \centering
          \includegraphics[scale=0.41]{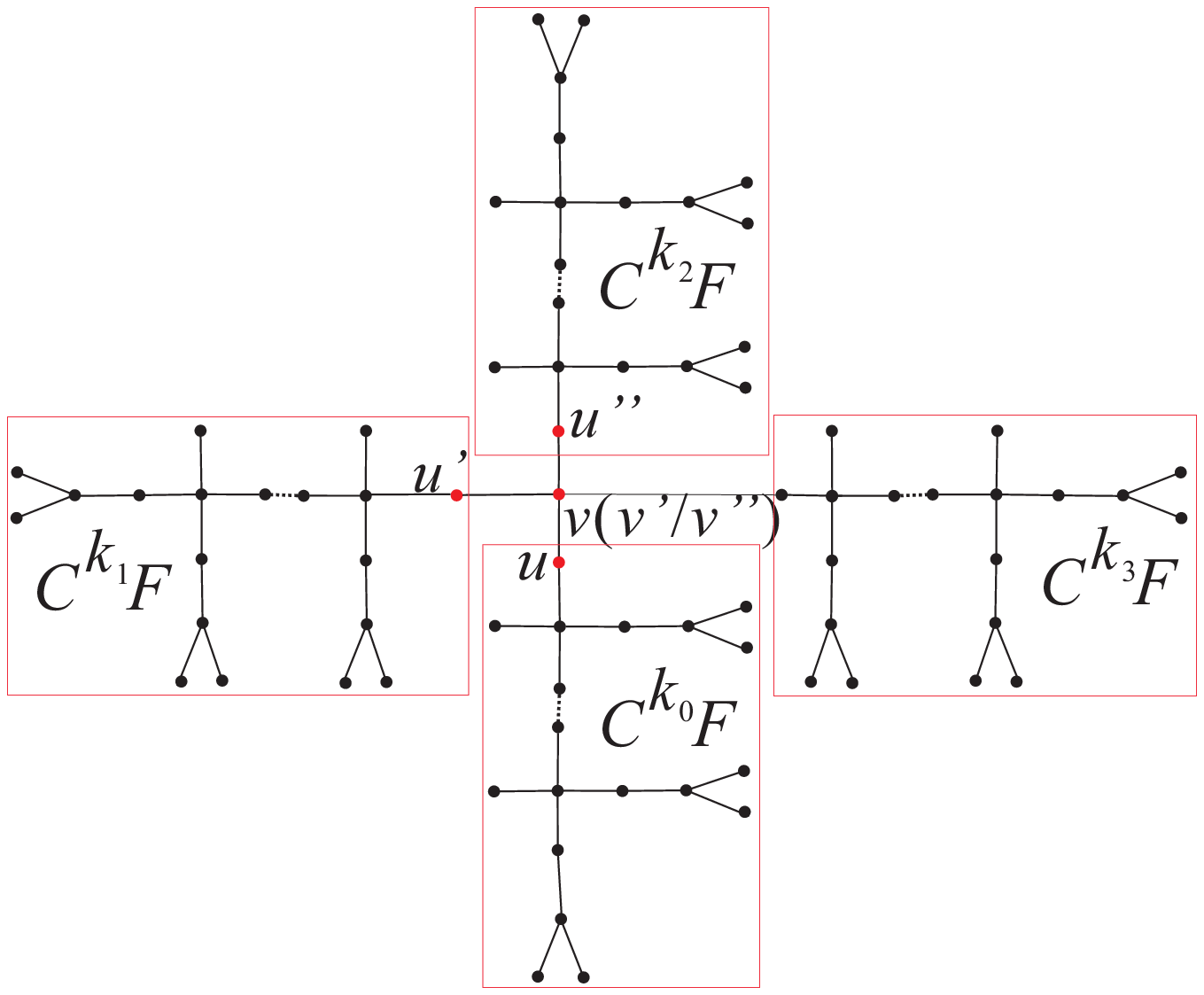}\\

     \end{minipage}
     }

 \subfigure[$n\equiv0(mod7)$]{
       \begin{minipage}[t]{0.2\linewidth}
          \centering
          \includegraphics[scale=0.44]{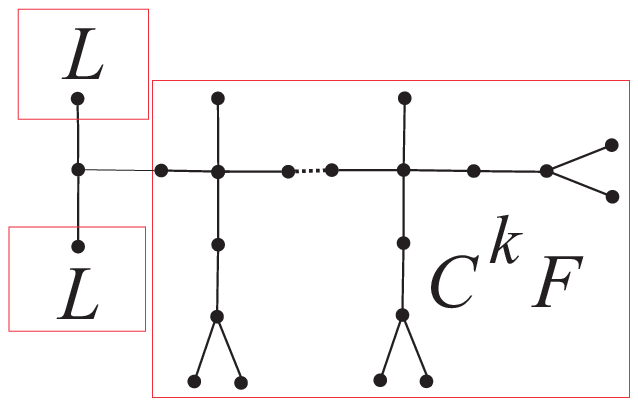}\\

     \end{minipage}
     }
   \subfigure[$n\equiv5(mod7)$]{
       \begin{minipage}[t]{0.23\linewidth}
          \centering
          \includegraphics[scale=0.41]{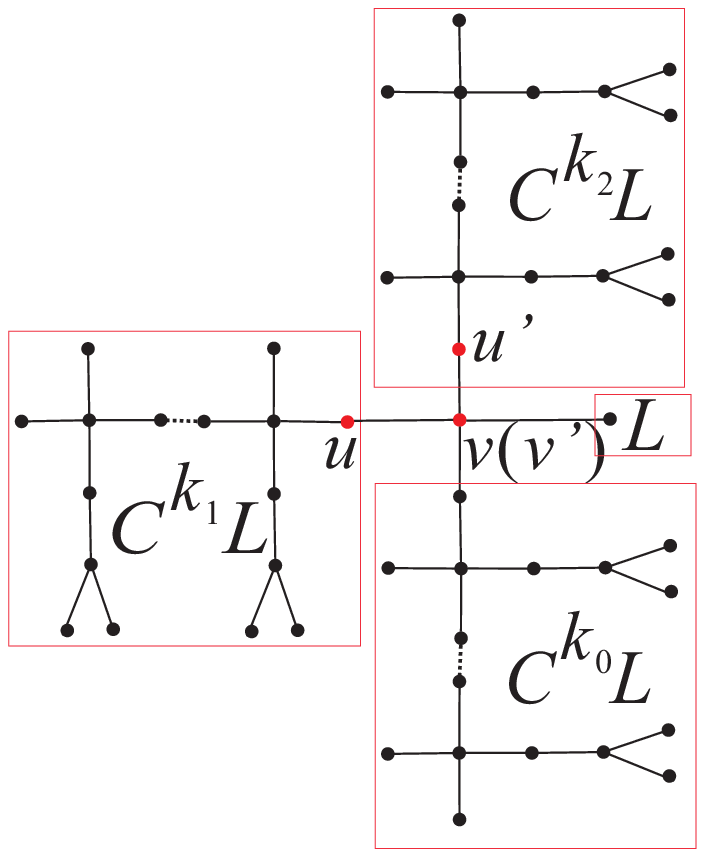}\\

     \end{minipage}
     }
     \subfigure[$n\equiv6(mod7)$, $n\geq 27, n\neq 34$]{
       \begin{minipage}[t]{0.43\linewidth}
          \centering
          \includegraphics[scale=0.41]{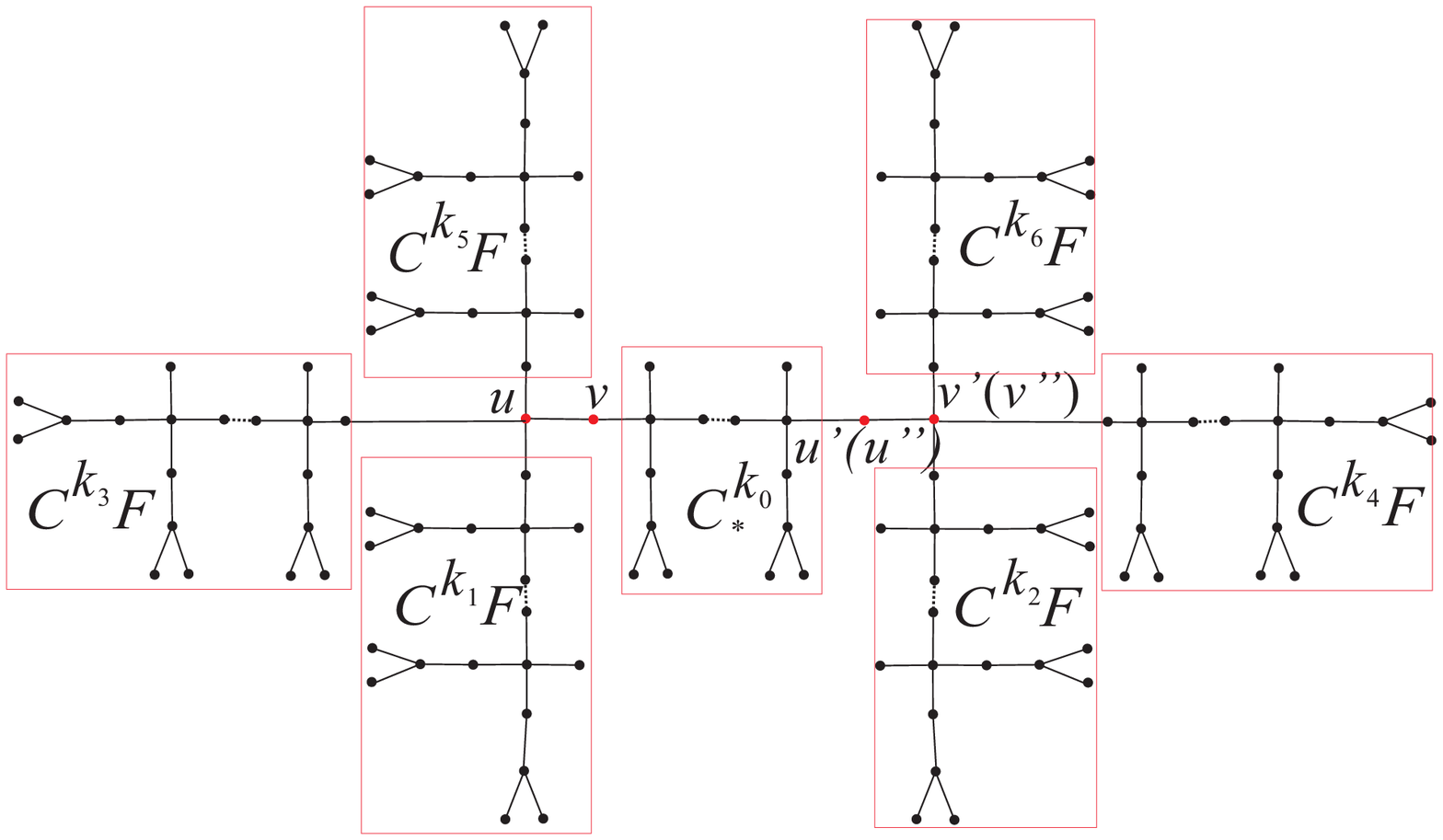}\\

     \end{minipage}
     }

     \centering
     \caption{The structure of some trees $T_{n}^{*}$.}
     \vspace{-0.2cm}
     \label{fig3}
\end{figure}

\begin{lemma}\label{art32}\cite{lov}
Let $G$ be a graph and let $\Phi_{k}(G)$ denote the number of  $k$ matchings in $G$, suppose $uv\in E(G)$. Then
\begin{eqnarray}\label{equ31}
\Phi_{k}(G)=\Phi_{k}(G-uv)+\Phi_{k-1}(G-u-v).
\end{eqnarray}
\end{lemma}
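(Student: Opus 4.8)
The plan is to count the $k$-matchings of $G$ by conditioning on the single fixed edge $uv\in E(G)$. Every $k$-matching $M$ of $G$ lies in exactly one of two disjoint classes: those with $uv\notin M$ and those with $uv\in M$. Summing the sizes of these two classes will give $\Phi_k(G)$, and I will identify each size with one of the two terms on the right-hand side of (\ref{equ31}).

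First I would handle the class $uv\notin M$. Such a matching uses only edges of $G-uv$, the graph obtained by deleting the edge $uv$ while keeping both endpoints $u$ and $v$; conversely, any $k$-matching of $G-uv$ is a $k$-matching of $G$ that avoids $uv$. This gives an obvious bijection between the two sets, so this class has exactly $\Phi_k(G-uv)$ members. Next I would treat the class $uv\in M$. Since $M$ is a matching, no other edge of $M$ is incident with $u$ or $v$, and hence $M\setminus\{uv\}$ touches neither $u$ nor $v$ and is a $(k-1)$-matching of the vertex-deleted graph $G-u-v$. Conversely, starting from any $(k-1)$-matching $M'$ of $G-u-v$, the set $M'\cup\{uv\}$ is a $k$-matching of $G$ containing $uv$, because $u$ and $v$ are untouched by $M'$. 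The maps $M\mapsto M\setminus\{uv\}$ and $M'\mapsto M'\cup\{uv\}$ are mutually inverse, so this class is in bijection with the $(k-1)$-matchings of $G-u-v$ and therefore has $\Phi_{k-1}(G-u-v)$ members.

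Finally, since the two classes are disjoint and together exhaust all $k$-matchings of $G$, I would add the two counts to obtain $\Phi_k(G)=\Phi_k(G-uv)+\Phi_{k-1}(G-u-v)$, which is (\ref{equ31}). There is no genuine obstacle in this argument; the only point that needs care is the elementary but essential observation that, in a matching, the two endpoints of a selected edge cannot be reused by any other edge. This is precisely what makes $M\setminus\{uv\}$ a legitimate matching of $G-u-v$ and what guarantees that the two maps in the second class are inverse bijections rather than merely injections.
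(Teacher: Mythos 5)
Your proof is correct and complete: the partition of $k$-matchings according to whether they contain the edge $uv$, together with the two bijections you describe, is the standard argument for this identity. The paper itself gives no proof (it simply cites Lov\'asz--Plummer), so there is nothing to contrast with; your write-up supplies exactly the routine justification the reference would give.
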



Before calculating the number of maximum matchings of $T_{n}^{*}$, we calculate the number of maximum matchings of those induced subgraph by the formula (\ref{equ24}).

\begin{lemma}\label{art33} The induced subgraphs $C^{k_{j}}L$, $C^{k_{j}}$, $C^{k_{j}}P_{3}$, $C^{k_{0}}_{*}$, $C^{k_{j}}F$, $C^{k_{j}}FL$, $C^{k_{j}}F-1$ obtained by trees $T_{n}^{*}$ with $n\geq 4$ and $n\notin\{6,10,13,20,34\}$are defined as above. Then by  formula (\ref{equ24}) we have
\begin{eqnarray*}
(i)~~~~~M_{\max}(C^{k_{j}}L)=11M_{\max}(C^{k_{j}-1}L)-9M_{\max}(C^{k_{j}-2}L),
\end{eqnarray*}
where $ k_{j}\geq3, 0\leq j\leq 6$. The initial conditions are $M_{\max}(C^{1}L)=11, M_{\max}(C^{2}L)=112$.

(ii) By the recurrence of (i),
\begin{eqnarray*}
M_{\max}(C^{k_{j}})=
5M_{\max}(C^{k_{j}-1}L)+3M_{\max}(C^{k_{j}-1}).
\end{eqnarray*}
for $k_{j}\geq 2, 0\leq j\leq 6$ with the initial condition $M_{\max}(C^{1})=8$.

(iii) By the recurrence of (i) and (ii),
\begin{eqnarray*}
M_{\max}(C^{k_{j}} P_{3})=13M_{\max}(C^{k_{j}}L)+6M_{\max}(C^{k_{j}}),
\end{eqnarray*}
where $k_{j}\geq 2, 0\leq j\leq 6$, and the initial condition is $M_{\max}(C^{1}P_{3})=19$.

(iv) By the recurrence of (ii),
\begin{eqnarray*}
M_{\max}(C^{k_{0}}_{*})=5M_{\max}(C^{k_{0}-1})+3M_{\max}(C^{k_{0}-1}_{*}),
\end{eqnarray*}
for $k_{0}\geq 2$ with the initial condition $M_{\max}(C^{1}_{*})=5$.

(vi) By the recurrence of (i) and (ii),
\begin{eqnarray*}
M_{\max}(C^{k_{j}}F)=3M_{\max}(C^{k_{j}})+6M_{\max}(C^{k_{j}-1}L),
\end{eqnarray*}
where $k_{j}\geq 2, 0\leq j\leq 6$ with the initial condition $M_{\max}(C^{1}F)=30$.

(vii) By the recurrence of (vi),
\begin{eqnarray*}
M_{\max}(C^{k_{j}}F-L)=5M_{\max}(C^{k_{j}-1}F)+3M_{\max}(C^{k_{j}-1}F-L),
\end{eqnarray*}
for $k_{j}\geq 2, 0\leq j\leq 6$ and the initial condition is $M_{\max}(C^{1}F-L)=21$.

(viii) By the recurrence of (vi),
\begin{eqnarray*}
M_{\max}(C^{k_{j}}FL)=5M_{\max}(C^{k_{j}-1}F)+3M_{\max}(C^{k_{j}-1}FL),
\end{eqnarray*}
for $k_{j}\geq 2, 0\leq j\leq 6$ and the initial condition is $M_{\max}(C^{1}FL)=21$.
\end{lemma}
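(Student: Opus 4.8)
The plan is to treat each graph in the lemma as a chain of $k_{j}$ identical $C$-blocks carrying a terminal decoration ($L$, $P_{3}$, $F$, $FL$, $F-L$, or none), and to read the recurrences off two kinds of local reductions, in each case applying the Gallai-Edmonds count (\ref{equ24}). The first is a \emph{chain reduction}: fix the bridge edge joining the outermost $C$-block to the rest, compute the partition $D$, $A$, $C$ of Theorem \ref{art22} for the whole graph, and split the maximum matchings according to the behaviour of the matching on that block. Because each $C$-block is a small factor-critical (or near-factor-critical) piece, Theorem \ref{art23} together with the product structure in (\ref{equ24}) turns this split into a linear relation between $M_{\max}$ of the given graph and $M_{\max}$ of the two smaller graphs obtained by deleting the block, in which the exposed end appears either as a plain chain or as an $L$-end. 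The second is a \emph{decoration reduction}: when the terminal piece is $P_{3}$ or $F$, decompose only its edges (via (\ref{equ24}) or the matching recurrence (\ref{equ31})) while keeping $k_{j}$ fixed; casing on whether the junction vertex is saturated inside the decoration or inside the chain produces the same-level relations (iii) and (vi).

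The heart of the argument is the coupled pair for $a_{n}:=M_{\max}(C^{n}L)$ and $b_{n}:=M_{\max}(C^{n})$. The chain reduction applied to the plain chain gives (ii), $b_{n}=5a_{n-1}+3b_{n-1}$, the two coefficients being the numbers of local completions of the peeled block in the cases distinguished by whether the bridge edge carries a matching edge. The same reduction applied to $C^{n}L$ yields the companion relation $a_{n}=8a_{n-1}+3b_{n-1}$. Viewing these as a $2\times2$ transfer relation for the vector $(a_{n},b_{n})^{\top}$, the transfer matrix has trace $11$ and determinant $9$, so both sequences satisfy $x_{n}=11x_{n-1}-9x_{n-2}$; eliminating $b$ gives exactly (i). Items (iv), (vii) and (viii) arise from the same chain reduction applied to $C^{k_{0}}_{*}$, $C^{k_{j}}F-L$ and $C^{k_{j}}FL$: in each case the peeled block couples the quantity to an already-analysed family ($C^{n-1}$ for (iv), and $C^{n-1}F$ for (vii) and (viii)), yielding the stated first-order recurrences directly.

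It then remains to fix the initial data. Each base case, namely $C^{1}L$, $C^{1}$, $C^{1}P_{3}$, $C^{1}_{*}$, $C^{1}F$, $C^{1}F-L$, $C^{1}FL$ (and $C^{2}L$ for the second-order recurrence), is a small explicit graph whose maximum matchings can be enumerated directly, or computed from (\ref{equ24}), to give the stated values $11,8,19,5,30,21,21$ and $112$. One checks that the recurrences and initial values are mutually consistent; for instance $b_{2}=5a_{1}+3b_{1}=79$ and $a_{2}=8a_{1}+3b_{1}=112$, the latter agreeing with (i) since $11a_{2}-9a_{1}=11a_{1}\cdot(\text{next})$ collapses to the value forced by the transfer matrix. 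This pins down every constant unambiguously.

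The main obstacle is the Gallai-Edmonds bookkeeping at the bridge. For each gadget type one must determine precisely which vertices of the newly attached block fall into $D$, $A$ and $C$, and then read the correct multiplicities off the factor-critical components and the auxiliary bipartite graph $H$ of Theorem \ref{art24} appearing in (\ref{equ24}); a single miscount alters a coefficient and hence the whole recurrence. The subtlety that forces the coupled-system formulation is that peeling one block can flip the exposed end between plain-chain type and $L$-type, so $a_{n}$ and $b_{n}$ (and likewise the $F$-families) cannot be handled in isolation. Once the local case analysis at the junction is carried out carefully for the two or three representative blocks, the remaining items follow by the same template together with routine elimination and arithmetic.
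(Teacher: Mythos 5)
The first thing to say is that the paper offers essentially no proof of this lemma: it is stated with the single phrase ``by formula (\ref{equ24})'' and the text moves straight on to Theorem \ref{art34}. So your outline cannot be measured against a written argument; what it can be measured against is the content of the lemma itself, and there your strategy is the right one and is almost certainly what the authors intend. Peeling the outermost $C$-block via deletion--contraction on the bridge edge (Lemma \ref{art32} restricted to maximum matchings, combined with the product structure of (\ref{equ24})) does produce a coupled first-order system, and your companion relation $a_n=8a_{n-1}+3b_{n-1}$ together with (ii) gives a transfer matrix of trace $11$ and determinant $9$, hence (i); your checks $a_2=8\cdot 11+3\cdot 8=112$ and $b_2=79$ are correct and genuinely confirm that this is the unique linear scheme consistent with the stated data.

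The gap is that the entire content of the lemma is the specific coefficients, and your proposal never derives a single one of them from the actual structure of the blocks $C$, $F$, $L$, $P_3$ (Figure \ref{fig2}); the phrase ``the numbers of local completions of the peeled block'' is a placeholder for the whole proof. Two concrete dangers hide there. First, when you split $M_{\max}(G)$ as (matchings avoiding the bridge $uv$) plus (matchings using $uv$), the identity $\Phi_k(G)=\Phi_k(G-uv)+\Phi_{k-1}(G-u-v)$ only converts into a relation among \emph{maximum}-matching counts if you verify in each case that $\nu(G-uv)=\nu(G)$ and $\nu(G-u-v)=\nu(G)-1$; if either fails the corresponding term must be dropped or reinterpreted, and this is exactly the kind of check that changes a coefficient. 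Second, had you actually carried out the local analysis you would have discovered that item (iii) as stated cannot be correct: it contradicts its own initial condition ($13\cdot 11+6\cdot 8=191\neq 19=M_{\max}(C^{1}P_{3})$), whereas the shifted form $M_{\max}(C^{k_j}P_3)=13M_{\max}(C^{k_j-1}L)+6M_{\max}(C^{k_j-1})$ gives $19$ at $k_j=1$ and the value $191$ for $C^{2}P_{3}$ that the paper itself uses in the $n=72$ example. Your description of (iii) as a ``same-level'' decoration reduction endorses the misprinted form, so as written your plan would be arguing for a false identity. Completing the proof requires doing the junction bookkeeping explicitly for each of the seven families, not only for the $(a_n,b_n)$ pair.
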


\begin{theorem}\label{art34}
Let $T_{n}^{*}$ be a tree with $n(n\geq4, n\notin\{6,10,13,20,34\})$,

(1) If $n\equiv 0(mod~7)$ and $n\geq14$, then
\begin{eqnarray*}
M_{\max}(T_{n}^{*})=3M_{\max}(C^{k}F-L)+6M_{\max}(C^{k}F),
\end{eqnarray*}
where $k=\frac{n-7}{7}$, and the initial condition is $M_{\max}(T_{7}^{*})=8$.

(2) If $n\equiv 1(mod~7)$ and $n\geq 22$, then
\begin{eqnarray*}
M_{\max}(T_{n}^{*})=M_{\max}(C^{(n-1)/7}L)=11M_{\max}(C^{(n-8)/7}L)-9M_{\max}(C^{(n-15)/7}L),
\end{eqnarray*}
 the initial conditions are $M_{\max}(C^{1}L)=11, M_{\max}(C^{2}L)=112$.

(3) If $n\equiv 4(mod~7)$ and $n\geq 18$, then
\begin{eqnarray*}
M_{\max}(T_{n}^{*})=M_{\max}(C^{(n-4)/7}F)=3M_{\max}(C^{(n-4)/7})+6M_{\max}(C^{(n-11)/7}L),
 \end{eqnarray*}
the initial condition is $M_{\max}(C^{1}F)=30$.

(4) If $n\equiv 3(mod~7)$ and $n\geq 17$, then
\begin{equation*}
M_{\max}(T_{n}^{*})=
\begin{cases}
216 & \text{if $n=17$};\\
2187 & \text{if $n=24$};\\
22140 & \text{if $n=31$};\\
224100 & \text{if $n=38$};\\
\prod ^{2}_{j=0}(3M_{\max}(C^{k_{j}})+6M_{\max}(C^{k_{j}-1}F))(5M_{\max}(C^{k_{3}-1}F))+\\3M_{\max}(C^{k_{3}-1}FL)+\prod _{j=0,1,3}(3M_{\max}(C^{k_{j}})+6M_{\max}(C^{k_{j}-1}F)\\\times(5M_{\max}(C^{k_{2}-1}F)+3M_{\max}(C^{k_{2}-1}F-L))
+\prod _{j=0,2,3}(3M_{\max}(C^{k_{j}})\\+6M_{\max}(C^{k_{j}-1}F)(5M_{\max}(C^{k_{1}-1}F))+3M_{\max}(C^{k_{1}-1}F-L)\\+\prod ^{3}_{j=1}(3M_{\max}(C^{k_{j}})+6M_{\max}(C^{k_{j}-1}F)(5M_{\max}(C^{k_{0}-1}F)\\+3M_{\max}(C^{k_{0}-1}F-L)) & \text{if $n\geq45$}.
\end{cases}
 \end{equation*}
 where $k_{j}=\lfloor \frac{n-17+7j}{28}\rfloor, j\in\{0,1,2,3\}$.

(5) If $n\equiv 5(mod~7)$ and $n\geq 12$, then
\begin{equation*}
M_{\max}(T_{n}^{*})=
\begin{cases}
41 & \text{if $n=12$};\\
418 & \text{if $n=19$};\\
\prod^{2}_{j=1}(11M_{\max}(C^{k_{j}-1}L)-9M_{\max}(C^{k_{j}-2}L)) (13M_{\max}(C^{k_{0}}L)\\+6M_{\max}(C^{k_{0}}))+\prod^{1}_{j=0}(11M_{\max}(C^{k_{j}-1}L)-9M_{\max}(C^{k_{j}-2}L)) \\ \times(5M_{\max}(C^{k_{2}-1}L)+3M_{\max}(C^{k_{2}-1}))+\prod_{j=0, 2}(11M_{\max}(C^{k_{j}-1}L)\\-9M_{\max}(C^{k_{j}-2}L))(5M_{\max}(C^{k_{1}-1}L)+3M_{\max}(C^{k_{1}-1})) & \text{if $n\geq 26$}.
\end{cases}
 \end{equation*}
where $k_{j}=\lfloor \frac{n-5+7j}{21}\rfloor, j\in\{0,1,2\}$.

(6) If $n\equiv 2(mod~7)$ and $n\geq 9$ , then

(i) $M_{\max}(T_{9}^{*})=15$, $M_{\max}(T_{16}^{*})=153$, $M_{\max}(T_{23}^{*})=1560$, $M_{\max}(T_{30}^{*})=15807$.

(ii) When $37\leq n \leq 65$,
\begin{eqnarray}\label{equ33}
  &&M_{\max}(T_{n}^{*})\nonumber\\
  &=&\prod^{2}_{j=1}(13M_{\max}(C^{k_{j}}L)+6M_{\max}(C^{k_{j}}))\prod^{4}_{j=3}(11M_{\max}(C^{k_{j}-1}L)-9M_{\max}(C^{k_{j}-2}L))\nonumber\\
 &&+(13M_{\max}(C^{k_{1}}L)+6M_{\max}(C^{k_{1}}))[\prod^{3}_{j=2}(11M_{\max}(C^{k_{j}-1}L)-9M_{\max}(C^{k_{j}-2}L))\nonumber\\
 &&\times(5M_{\max}(C^{k_{4}-1}L)+3M_{\max}(C^{k_{4}-1}))+\prod^{4}_{j=2}(11M_{\max}(C^{k_{j}-1}L)-9M_{\max}(C^{k_{j}-2}L))] \nonumber\\
 && +(13M_{\max}(C^{k_{2}}L)+6M_{\max}(C^{k_{2}}))[\prod_{j=1,4}(11M_{\max}(C^{k_{j}-1}L)-9M_{\max}(C^{k_{j}-2}L))\\
&&\times(5M_{\max}(C^{k_{3}-1}L)+3M_{\max}(C^{k_{3}-1}))+\prod_{j=1,3,4}(11M_{\max}(C^{k_{j}-1}L)-9M_{\max}(C^{k_{j}-2}L))]\nonumber\\
&&+\prod^{2}_{j=1}(11M_{\max}(C^{k_{j}-1}L)-9M_{\max}(C^{k_{j}-2}L))\prod^{4}_{j=3}(5M_{\max}(C^{k_{j}-1}L)+3M_{\max}(C^{k_{j}-1}))\nonumber\\
&&+\prod_{j=1,2,4}(11M_{\max}(C^{k_{j}-1}L)-9M_{\max}(C^{k_{j}-2}L))(5M_{\max}(C^{k_{3}-1}L)+3M_{\max}(C^{k_{3}-1}))\nonumber\\
&&+\prod^{3}_{j=1}(11M_{\max}(C^{k_{j}-1}L)-9M_{\max}(C^{k_{j}-2}L))(5M_{\max}(C^{k_{4}-1}L)+3M_{\max}(C^{k_{4}-1}))\nonumber.
\end{eqnarray}

\begin{equation*}
 \tiny{
 \setlength{\arraycolsep}{3pt}
L(C_{n}^{q})=\begin{pmatrix}
   \begin{array}{cccccc|c|cccc}
1 & -1 & 0 & \cdots & 0  & 0 & 0 & 0 & 0 & \cdots & 0\\
-1 & 2 & -1 & \cdots &  0 & 0  & 0 & 0 & \cdots & 0\\
0 & -1 & 2 & \cdots  & 0 & 0  & 0 & 0 & \cdots & 0\\
\vdots & \vdots & \vdots & \ddots & \vdots & \vdots & \vdots & \vdots & \vdots  & \vdots\\
0 & 0 & 0 & \cdots & 2 & -1  & 0 & 0 & 0 & \cdots & 0\\
0 & 0 & 0 & \cdots & -1 & 2  & -1 & 0 & 0 &\cdots & 0\\
\hline
0 & 0 & 0 & \cdots & 0  & -1 & n-k+1 & -1 &  -1 & \cdots & -1\\
\hline
0 & 0 & 0 & \cdots & 0 & 0 & -1 & 1 & 0 &\cdots & 0\\
\vdots & \vdots & \vdots & \vdots & \vdots &  \vdots & \vdots & \vdots & \ddots  & \vdots\\
0 & 0 & 0 & \cdots & 0 & 0  & 0 & 0 & \cdots & 1\\
\end{array}
 \end{pmatrix}
 }.
 \end{equation*}

(iii) When $n\geq 72$,
\begin{eqnarray}\label{equ34}
&& M_{\max}(T_{n}^{*})\nonumber\\
 &=&\prod_{j=0,3,4}(11M_{\max}(C^{k_{j}-1}L)-9M_{\max}(C^{k_{j}-2}L))\prod^{2}_{j=1}(13M_{\max}(C^{k_{j}}L)+6M_{\max}(C^{k_{j}}))\nonumber\\
&&+(13M_{\max}(C^{k_{1}}L)+6M_{\max}(C^{k_{1}}))[\prod_{j=0,2,3}(11M_{\max}(C^{k_{j}-1}L)-9M_{\max}(C^{k_{j}-2}L))\nonumber\\
&&\times(5M_{\max}(C^{k_{4}-1}L)+3M_{\max}(C^{k_{4}-1}))+\prod^{4}_{j=2}(11M_{\max}(C^{k_{j}-1}L)-9M_{\max}(C^{k_{j}-2}L))\nonumber\\
&&\times(5M_{\max}(C^{k_{0}-1}L)+3M_{\max}(C^{k_{0}-1}))+(13M_{\max}(C^{k_{2}}L)+6M_{\max}(C^{k_{2}}))\nonumber\\
&&\times[\prod_{j=0,1,4}(11M_{\max}(C^{k_{j}-1}L)-9M_{\max}(C^{k_{j}-2}L))(5M_{\max}(C^{k_{3}-1}L)+3M_{\max}(C^{k_{3}-1}))\nonumber\\
&&+\prod_{j=1,3,4}(11M_{\max}(C^{k_{j}-1}L)-9M_{\max}(C^{k_{j}-2}L))(5M_{\max}(C^{k_{0}-1}L)\nonumber\\
&&+3M_{\max}(C^{k_{0}-1}))]+\prod^{2}_{j=0}(11M_{\max}(C^{k_{j}-1}L)-9M_{\max}(C^{k_{j}-2}L))\prod_{j=3,4}(5\\
&&\times M_{\max}(C^{k_{j}-1}L)+3M_{\max}(C^{k_{j}-1}))+\prod_{j=0,3}(5M_{\max}(C^{k_{j}-1}L)+3M_{\max}(C^{k_{j}-1}))\nonumber\\
&&\times\prod_{j=1,2,4}(11M_{\max}(C^{k_{j}-1}L)-9M_{\max}(C^{k_{j}-2}L))+\prod_{j=0,4}(5M_{\max}(C^{k_{j}-1}L)\nonumber\\
&&+3M_{\max}(C^{k_{j}-1}))\prod^{3}_{j=1}(11M_{\max}(C^{k_{j}-1}L)-9M_{\max}(C^{k_{j}-2}L))+(5M_{\max}(C^{k_{0}-1})\nonumber\\
&&+3M_{\max}(C_{*}^{k_{0}-1}))\prod^{4}_{j=1}(11M_{\max}(C^{k_{j}-1}L)-9M_{\max}(C^{k_{j}-2}L)),\nonumber
\end{eqnarray}
where
\begin{equation*}
k_{0}=max\{0, \lfloor\frac{n-37}{35}\rfloor\},
k_{j}=
\begin{cases}
\lfloor\frac{n-2+7j}{35}\rfloor & \text{if $n\geq 37$};\\
\lfloor\frac{n-9+7j}{35}\rfloor & \text{if $n\leq 30$}.
\end{cases}
 \end{equation*}
 and $j\in \{1,2,3,4\}$.

(7) If $n\equiv 6(mod~7)$ and $n\geq 27, n\neq 34$, then

(i) $M_{\max}(T_{27}^{*})=5832$, $M_{\max}(T_{41}^{*})=597861$, $M_{\max}(T_{48}^{*})=6052320$, $M_{\max}(T_{55}^{*})=61268400$, $M_{\max}(T_{62}^{*})=620136000$, $M_{\max}(T_{69}^{*})=6276690000$.

(ii) When $n\geq 76$,
\begin{eqnarray}\label{equ35}
 &&M_{\max}(T_{n}^{*})\nonumber\\
 &=&(11M_{\max}(C^{k_{0}-1}L)-9M_{\max}(C^{k_{0}-2}L))\{\prod_{j=1,3,4,6}(3M_{\max}(C^{k_{j}})+6M_{\max}(C^{k_{j}-1}L))\nonumber\\
&&\times\prod_{j=2,5}(5M_{\max}(C^{k_{j}-1}F)+3M_{\max}(C^{k_{j}-1}FL))+(5M_{\max}(C^{k_{2}-1}F)\nonumber\\
&&+3M_{\max}(C^{k_{2}-1}FL))[\prod_{j=1,4,5,6}(3M_{\max}(C^{k_{j}})+6M_{\max}(C^{k_{j}-1}L))(5M_{\max}(C^{k_{3}-1}F)\nonumber\\
&&+3M_{\max}(C^{k_{3}-1}F-L))+\prod^{6}_{j=3}(3M_{\max}(C^{k_{j}})+6M_{\max}(C^{k_{j}-1}L))(5M_{\max}(C^{k_{1}-1}F)\nonumber\\
&&+3M_{\max}(C^{k_{1}-1}F-L))]+(5M_{\max}(C^{k_{5}-1}F)+3M_{\max}(C^{k_{5}-1}FL))\\
&&\times[\prod_{j=1,2,3,6}(3M_{\max}(C^{k_{j}})+6M_{\max}(C^{k_{j}-1}L))(5M_{\max}(C^{k_{4}-1}F)+3M_{\max}(C^{k_{4}-1}F\nonumber\\
&&-L))+\prod^{4}_{j=1}(3M_{\max}(C^{k_{j}})+6M_{\max}(C^{k_{j}-1}L))(5M_{\max}(C^{k_{6}-1}F)+3M_{\max}(C^{k_{6}-1}F \nonumber\\
&&-L))]+\prod_{j=1,2,5,6}(3M_{\max}(C^{k_{j}})+6M_{\max}(C^{k_{j}-1}L))\prod^{4}_{j=3}(5M_{\max}(C^{k_{j}-1}F)\nonumber\\
&&+3M_{\max}(C^{k_{j}-1}F-L))+\prod_{j=1,2,4,5}(3M_{\max}(C^{k_{j}})+6M_{\max}(C^{k_{j}-1}L))\nonumber\\
&&\prod_{j=3,6}(5M_{\max}(C^{k_{j}-1}F)+3M_{\max}(C^{k_{j}-1}F-L))+\prod_{j=2,3,5,6}(3M_{\max}(C^{k_{j}})\nonumber\\
&&+6M_{\max}(C^{k_{j}-1}L))\prod_{j=1,4}(5M_{\max}(C^{k_{j}-1}F)+3M_{\max}(C^{k_{j}-1}F-L))+\prod^{5}_{j=2}(3\nonumber\\
&&M_{\max}(C^{k_{j}})+6M_{\max}(C^{k_{j}-1}L))\prod_{j=1,6}(5M_{\max}(C^{k_{j}-1}F)+3M_{\max}(C^{k_{j}-1}F-L))\}\nonumber\\
&&+(5M_{\max}(C^{k_{0}-1}L)+3M_{\max}(C^{k_{0}-1}))[\prod_{j=1,2,3,4,6}(3M_{\max}(C^{k_{j}})+6M_{\max}(C^{k_{j}-1}L))\nonumber\\
&&\times(5M_{\max}(C^{k_{5}-1}F)+3M_{\max}(C^{k_{5}-1}FL))+\prod_{j=1,2,4,5,6}(3M_{\max}(C^{k_{j}})+6M_{\max}(C^{k_{j}-1}\nonumber\\
&&L))(5M_{\max}(C^{k_{3}-1}F)+3M_{\max}(C^{k_{3}-1}F-L))+\prod^{6}_{j=2}(3M_{\max}(C^{k_{j}})+6M_{\max}(C^{k_{j}-1}\nonumber\\
&&L))(5M_{\max}(C^{k_{1}-1}F)+3M_{\max}(C^{k_{1}-1}F-L))+\prod_{j=1,3,4,5,6}(3M_{\max}(C^{k_{j}})\nonumber\\
&&+6M_{\max}(C^{k_{j}-1}L))(5M_{\max}(C^{k_{2}-1}F)+3M_{\max}(C^{k_{2}-1}FL))+\prod_{j=1,2,3,5,6}(3M_{\max}(C^{k_{j}})\nonumber\\
&&+6M_{\max}(C^{k_{j}-1}L))(5M_{\max}(C^{k_{4}-1}F)+3M_{\max}(C^{k_{4}-1}F-L))+\prod^{5}_{j=1}(3M_{\max}(C^{k_{j}})\nonumber\\
&&+6M_{\max}(C^{k_{j}-1}L))(5M_{\max}(C^{k_{6}-1}F)+3M_{\max}(C^{k_{6}-1}F-L))]+\prod^{6}_{j=1}(3M_{\max}(C^{k_{j}})\nonumber\\
&&+6M_{\max}(C^{k_{j}-1}L))(5M_{\max}(C^{k_{0}-1})+3M_{\max}(C_{*}^{k_{0}-1}))\nonumber.
\end{eqnarray}
where $k_{j}=\lfloor\frac{n-27+7j}{49}\rfloor, j\in\{0,1,2\}$.
\end{theorem}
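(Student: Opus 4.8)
The plan is to handle the seven residue classes of $n$ modulo $7$ separately, since Lemma \ref{art31} furnishes a distinct skeleton for $T_{n}^{*}$ in each case. For a fixed residue I would first apply the Gallai--Edmonds decomposition (Theorem \ref{art22}) to $T_{n}^{*}$ to pin down the partition $V(T_{n}^{*})=C\cup A\cup D$, and then feed this partition into the master formula (\ref{equ24}) of Theorem \ref{art25}. The point is that each $T_{n}^{*}$ is assembled by attaching copies of the gadgets $C^{k_{j}}$, $C^{k_{j}}L$, $C^{k_{j}}F$, $C^{k_{0}}_{*}$, $C^{k_{j}}FL$ and $C^{k_{j}}F-L$ studied in Lemma \ref{art33}; I expect that the components $B_{i}$ of $G[D]$ are exactly the induced subgraphs corresponding to these gadgets, so that the quantities $M_{pm}(B_{i})$ and $\sum_{\alpha}M_{pm}(B_{i}-v_{\alpha})$ appearing in (\ref{equ24}) are precisely what the recurrences of Lemma \ref{art33} evaluate in closed recursive form.

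Concretely, I would proceed gadget by gadget. For the simple classes $n\equiv 1,4,0 \pmod 7$, Lemma \ref{art31} shows $T_{n}^{*}$ is essentially a single gadget ($C^{(n-1)/7}L$ or $C^{(n-4)/7}F$) or a gadget carrying one extra branch; here the auxiliary bipartite graph $H$ of Theorem \ref{art25} is trivial or nearly so, and (\ref{equ24}) collapses directly into the relevant recurrence of Lemma \ref{art33}, which yields parts (1)--(3). For the elaborate classes $n\equiv 2,3,5,6 \pmod 7$, the tree consists of a central vertex or central path with several gadget-branches attached; the Gallai--Edmonds analysis should place the central vertices into $A$ and the branch-gadgets into the components of $D$, so that $H$ is a small star-like bipartite graph. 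I would then enumerate $M_{\max}(H)$ via Theorem \ref{art24} and sum formula (\ref{equ24}) over those matchings: each branch saturated by the chosen matching of $H$ contributes a factor $M_{pm}(B_{i_{1}})$, while each branch missed by that matching contributes $\sum_{\alpha}M_{pm}(B_{i_{2}}-v_{\alpha})$. Substituting the recurrences and initial conditions of Lemma \ref{art33} into these products then produces the long alternating product-sum expressions displayed in parts (4)--(7).

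The small exceptional orders --- the explicit numerical values in (4) (namely $n=17,24,31,38$), (5) ($n=12,19$), (6)(i) ($n=9,16,23,30$) and (7)(i) --- I would treat by direct computation with the same decomposition, since there the gadget indices $k_{j}$ degenerate (some $k_{j}=0$ or $1$) and the recurrences reduce to the tabulated initial values. The hard part will be the bookkeeping of the sum over $M_{\max}(H)$ in the harder classes: one must track, for every maximum matching of the central bipartite graph, exactly which branch-gadgets are saturated and which are missed, because a missed branch not only switches its contribution from $M_{pm}(B_{i})$ to $\sum_{\alpha}M_{pm}(B_{i}-v_{\alpha})$ but also changes \emph{which} of the two recurrences applies (for example $C^{k_{j}}F$ versus $C^{k_{j}}F-L$ or $C^{k_{j}}FL$, and $C^{k_{j}}L$ versus its truncation). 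Establishing this correspondence between matchings of $H$ and the case-splits precisely --- and checking that no matching is double counted or omitted --- is what drives the multi-term formulas; the class $n\equiv 6 \pmod 7$, with six branches off the centre, is the most delicate. Once that correspondence is secured, the claimed identities follow routinely by substitution of the Lemma \ref{art33} recurrences.
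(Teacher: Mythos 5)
Your plan rests on a structural assumption that cannot hold for trees: you propose that the components $B_{i}$ of $G[D(G)]$ in the Gallai--Edmonds partition of $T_{n}^{*}$ are exactly the gadgets $C^{k_{j}}$, $C^{k_{j}}L$, $C^{k_{j}}F$, etc., so that formula (\ref{equ24}) produces factors $M_{pm}(B_{i})$ and $\sum_{\alpha}M_{pm}(B_{i}-v_{\alpha})$ evaluated by Lemma \ref{art33}. But by Theorem \ref{art22}(i) every component of $G[D(G)]$ is factor-critical, and the only factor-critical tree is a single vertex (if a factor-critical tree had a leaf $u$ with neighbour $w$, then $T-w$ would isolate $u$ and could not have a perfect matching). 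Hence for any tree $D(G)$ induces an independent set, the $B_{i}$ are singletons, and the multi-vertex gadgets cannot appear as components of $D$; they straddle $C$, $A$ and $D$. Relatedly, Lemma \ref{art33} gives recurrences for $M_{\max}$ of the gadgets, not for $M_{pm}(B_{i})$ or $\sum_{\alpha}M_{pm}(B_{i}-v_{\alpha})$, so the bridge you describe between (\ref{equ24}) and Lemma \ref{art33} does not exist as stated. This is the missing idea, not mere bookkeeping.

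The paper's own argument for the hard residue classes (4)--(7) runs along a different axis that your proposal never invokes: it repeatedly applies the deletion--contraction identity of Lemma \ref{art32}, $\Phi_{k}(G)=\Phi_{k}(G-uv)+\Phi_{k-1}(G-u-v)$, at a chain of carefully chosen edges $uv$, $u'v'$, $u''v''$ of $T_{n}^{*}$. Each application splits the tree into a disjoint union of the gadgets $C^{k_{j}}F$, $C^{k_{j}}FL$, $C^{k_{j}}F-L$, $C^{k_{j}}L$, $C^{k_{j}}P_{3}$, $C^{k_{0}}_{*}$, whose maximum-matching counts multiply component-wise; substituting the recurrences of Lemma \ref{art33} then yields the displayed product--sum formulas. (One still has to check at each split that both branches of the recurrence sit at the maximum matching size, which is the real bookkeeping burden.) Only parts (1)--(3) and the small sporadic orders are handled by the Gallai--Edmonds machinery you describe. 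To repair your proof you would either need to carry out the $C/A/D$ analysis with singleton $D$-components honestly --- which leads to a very different-looking computation --- or switch to the edge-decomposition route via Lemma \ref{art32} as the paper does.
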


\begin{proof}
 According to the Gallai-Edmonds vertex partition, by formula (\ref{equ23}), Lemmas \ref{art31} and \ref{art33}, we can derive the results of (1), (2) and (3). In what follows, we proved  (4), (5) and (6).

(4) When $n\equiv 3(mod~7)$ and $n\geq 17$. By formula (\ref{equ23}), $M_{\max}(G_{17})=216, M_{\max}(G_{24})=2187$, $M_{\max}(G_{31})=22140, M_{\max}(G_{38})=224100$.

If $n\geq 45$, by Lemma \ref{art32}, a tree  $T^{1}$ and $C^{k_{0}}F$ are get from $T_{n}^{*}-uv$, and we have $C^{k_{0}}F-L$, $ C^{k_{1}}F$, $C^{k_{2}}F$, $C^{k_{3}}F$ from $T_{n}^{*}-u-v$. By Lemma \ref{art32} again, $C^{k_{1}}F$ and a tree denoted by $T^{2}$ are obtained from $T^{1}-u^{'}v^{'}$, and we have $C^{k_{1}}F-L$, $C^{k_{2}}F$, $C^{k_{3}}F$ from $T^{1}-u^{'}-v^{'}$. Then we can get the induced subgraphs $C^{k_{2}}F$, $C^{k_{3}}FL$ from $T^{2}-u^{''}v^{''}$, $C^{k_{2}}F-L$, $C^{k_{3}}F$ are derive from $T^{2}-u^{''}-v^{''}$. Hence
\begin{eqnarray*}
 &&M_{\max}(T_{n}^{*})\nonumber\\
 &=&M_{\max}(C^{k_{0}}F)\{{M_{\max}(C^{k_{1}}F)}[M_{\max}(C^{k_{2}}F) M_{\max}(C^{k_{3}}FL)\\
&& +M_{\max}(C^{k_{2}}F-L)M_{\max}(C^{k_{3}}F)]+M_{\max}(C^{k_{1}}F-L)M_{\max}(C^{k_{2}}F)\\
&&\times M_{\max}(C^{k_{3}}F)\}+M_{\max}(C^{k_{0}}F-L)M_{\max}(C^{k_{1}}F)M_{\max}(C^{k_{2}}F) \times M_{\max}(C^{k_{3}}F)\\
&=&\prod^{2}_{j=0}M_{\max}(C^{k_{j}}F) M_{\max}(C^{k_{3}}F L)+\prod_{j=0,1,3}M_{\max}(C^{k_{j}}F) M_{\max}(C^{k_{2}}F-L)\\
&&+\prod_{j=0,2,3}M_{\max}(C^{k_{j}}F) M_{\max}(C^{k_{1}}F-L)+\prod^{3}_{j=1}M_{\max}(C^{k_{j}}F) M_{\max}(C^{k_{0}}F-L).
\end{eqnarray*}
So by Lemma \ref{art31} and \ref{art33}, we have
\begin{eqnarray*}
&&M_{\max}(T_{n}^{*})\\
&=&\prod^{2}_{j=0}(3M_{\max}(C^{k_{j}})+6M_{\max}(C^{k_{j}-1}F))(5M_{\max}(C^{k_{3}-1}F))+3M_{\max}(C^{k_{3}-1}FL)\\
&&+\prod _{j=0,1,3}(3M_{\max}(C^{k_{j}})+6M_{\max}(C^{k_{j}-1}F)(5M_{\max}(C^{k_{2}-1}F)+3M_{\max}(C^{k_{2}-1}F-L))\\
&&+\prod _{j=0,2,3}(3M_{\max}(C^{k_{j}})+6M_{\max}(C^{k_{j}-1}F)(5M_{\max}(C^{k_{1}-1}F))+3M_{\max}(C^{k_{1}-1}F-L)\\
&&+\prod ^{3}_{j=1}(3M_{\max}(C^{k_{j}})+6M_{\max}(C^{k_{j}-1}F)(5M_{\max}(C^{k_{0}-1}F)+3M_{\max}(C^{k_{0}-1}F-L)),
\end{eqnarray*}
where $k_{j}=\lfloor \frac{n-17+7j}{28}\rfloor, j\in\{0,1,2,3\}$.

(5) If $n\equiv 5(mod~7)$ and $n\geq 12$, by formula (\ref{equ23}), we have $M_{\max}(G_{12})=41, M_{\max}(G_{19})=418$.
When $n\geq 26$, by Lemma \ref{art32}, analogously, we obtain $C^{k_{1}}L$ and a tree  $T^{3}$ from $T_{n}^{*}-uv$, and we get that $C^{k_{0}}L$, $C^{k_{1}}$, $C^{k_{2}}L$ and $L$ from $T_{n}^{*}-u-v$. Afterwards, we obtain $C^{k_{0}}P_{3}$, $C^{k_{2}}L$ from $T^{3}-u^{'}v^{'}$, and we have $C^{k_{0}}L$, $C^{k_{2}}$ from $T^{3}-u^{'}-v^{'}$. Hence
 \begin{eqnarray*}
 M_{\max}(T_{n}^{*})&=&M_{\max}(C^{k_{1}}L)[M_{\max}(C^{k_{0}} P_{3})M_{\max}(C^{k_{2}}L)+M_{\max}(C^{k_{0}}L)\\
&&\times M_{\max}(C^{k_{2}})] M_{\max}(C^{k_{0}}L)M_{\max}(C^{k_{1}})M_{\max}(C^{k_{2}}L)\\
&=&\prod^{2}_{j=1}M_{\max}(C^{k_{j}}L)M_{\max}(C^{k_{0}} P_{3})+\prod^{1}_{j=0}M_{\max}(C^{k_{j}}L)M_{\max}(C^{k_{2}})\\
&&+\prod_{j=0, 2}M_{\max}(C^{k_{j}}L) M_{\max}(C^{k_{1}}).
\end{eqnarray*}
Since $M_{\max}(L)=1$, we omit it in this paper. Therefore, by Lemmas \ref{art31} and \ref{art33} we have
 \begin{eqnarray*}
 M_{\max}(T_{n}^{*})&=&\prod^{2}_{j=1}(11M_{\max}(C^{k_{j}-1}L)-9M_{\max}(C^{k_{j}-2}L))(13M_{\max}(C^{k_{0}}L)+6M_{\max}(C^{k_{0}}))\\
 &&+\prod^{1}_{j=0}(11M_{\max}(C^{k_{j}-1}L)-9M_{\max}(C^{k_{j}-2}L))(5M_{\max}(C^{k_{2}-1}L)+3M_{\max}(C^{k_{2}-1}))\\ &&+\prod_{j=0,2}(11M_{\max}(C^{k_{j}-1}L)-9M_{\max}(C^{k_{j}-2}L))(5M_{\max}(C^{k_{1}-1}L)+3M_{\max}(C^{k_{1}-1})),
\end{eqnarray*}
where $k_{j}=\lfloor \frac{n-5+7j}{21}\rfloor, j\in\{0,1,2\}$.

(6) If $n\equiv 2(mod~7)$ and $n\geq 9$, by formula (\ref{equ23}), we have $M_{\max}(G_{9})=15, M_{\max}(G_{16})=153,\\ M_{\max}(G_{23})=1560, M_{\max}(G_{30})=15807$.

When $37\leq n \leq 65$, by Lemma \ref{art32}, the trees  $T^{4}$ and $T^{5}$ are obtained from $T_{n}^{*}-uv$, and we can obtain $C^{k_{1}}L$, $C^{k_{3}}L$ and a tree $T^{6}$ from $T_{n}^{*}-u-v$. Then we get $L$ and $T^{6}$ from $T^{5}-u^{'}v^{'}$, and we have $C^{k_{2}}L$, $C^{k_{4}}L$ from $T^{5}-u^{'}-v^{'}$. Since the structure of trees $T^{4}$, $T^{6}$ are the same as that of $T^{3}$, i.e. they have the same recurrences, so we have
\begin{eqnarray*}
 &&M_{\max}(T_{n}^{*})\nonumber\\
 &=&[M_{\max}(C^{k_{1}}P_{3}) M_{\max}(C^{k_{3}}L)+M_{\max}(C^{k_{1}}L) M_{\max}(C^{k_{3}})] [M_{\max}(C^{k_{2}} P_{3})\\
&&\times M_{\max}(C^{k_{4}}L)+M_{\max}(C^{k_{2}}L) M_{\max}(C^{k_{4}})+M_{\max}(C^{k_{2}}L) M_{\max}(C^{k_{4}}L)]\\
&&+M_{\max}(C^{k_{1}}L) M_{\max}(C^{k_{3}}L)[M_{\max}(C^{k_{2}}P_{3})M_{\max}(C^{k_{4}}L)+M_{\max}(C^{k_{2}}L) M_{\max}(C^{k_{4}})]\\
&=&\prod^{2}_{j=1}M_{\max}(C^{k_{j}} P_{3})\prod^{4}_{j=3}M_{m}(C^{k_{j}}L)+M_{\max}(C^{k_{1}} P_{3})[ \prod^{3}_{j=2}M_{\max}(C^{k_{j}}L)\\
&&\times M_{\max}(C^{k_{4}})+\prod^{4}_{j=2}M_{\max}(C^{k_{j}}L)]+ M_{\max}(C^{k_{2}}P_{3}) [\prod_{j=1,4}M_{\max}(C^{k_{j}}L) \\
&& \times M_{\max}(C^{k_{3}})+\prod_{j=1,3,4}M_{\max}(C^{k_{j}}L)]+\prod^{2}_{j=1}M_{\max}(C^{k_{j}}L)\prod^{4}_{j=3}M_{\max}(C^{k_{j}})\\
&&+\prod_{j=1,2,4}M_{\max}(C^{k_{j}}L) M_{\max}(C^{k_{3}})+ \prod^{3}_{j=1}M_{\max}(C^{k_{j}}L)M_{\max}(C^{k_{4}}).
\end{eqnarray*}
And then we can obtain  formula (\ref{equ33}) by Lemmas \ref{art31} and \ref{art33}.

When $n\geq72$, by Lemma \ref{art32}, we can get trees $T^{7}$ and $T^{8}$ from $T_{n}^{*}-uv$, and we have $C^{k_{1}}L$, $C^{k_{3}}L$ and a tree  $T^{9}$ from $T_{n}^{*}-u-v$. Next, $C^{k_{0}}$ and a tree $T^{10}$ are derived from $T^{9}-u^{'}v^{'}$, and we obtain $C_{*}^{k_{0}}$, $C^{k_{2}}L$, $C^{k_{4}}L$ from $T^{9}-u^{'}-v^{'}$. Owing to the structures of trees $T^{7}$ and $T^{10}$ are the same as the structure of $T^{3}$, and the structure of the tree $T^{8}$ is also the same as $T_{n}^{*}$ when $n\equiv 5(mod~7)$, i.e. they have the same recurrences. Consequently,
\begin{eqnarray*}
 &&M_{\max}(T_{n}^{*})\nonumber\\
 &=&[M_{\max}(C^{k_{1}} P_{3}) M_{\max}(C^{k_{3}}L)+M_{\max}(C^{k_{1}}L) M_{\max}(C^{k_{3}})] \{M_{\max}(C^{k_{0}}L)\\
&&\times [M_{\max}(C^{k_{2}}P_{3})M_{\max}(C^{k_{4}}L)+M_{\max}(C^{k_{2}}L) M_{\max}(C^{k_{4}})]+M_{\max}(C^{k_{0}})\\
&&\times M_{\max}(C^{k_{2}}L) M_{\max}(C^{k_{4}}L)\}+M_{\max}(C^{k_{1}}L)M_{\max}(C^{k_{3}}L)\{M_{\max}(C^{k_{0}})\\
&&\times [M_{\max}(C^{k_{2}}P_{3})M_{\max}(C^{k_{4}}L)+M_{\max}(C^{k_{2}}L)M_{\max}(C^{k_{4}})]+M_{\max}(C^{k_{0}}_{*})\\
&&\times M_{\max}(C^{k_{2}}L) M_{\max}(C^{k_{4}}L)\}\\
&=&\prod_{j=0,3,4}M_{\max}(C^{k_{j}}L)\prod_{j=1,2}M_{\max}(C^{k_{j}}P_{3})+M_{\max}(C^{k_{1}} P_{3})[\prod_{j=0,2,3}M_{\max}(C^{k_{j}}L)\\
&&\times M_{\max}(C^{k_{4}})+\prod^{4}_{j=2}M_{\max}(C^{k_{j}}L)M_{\max}(C^{k_{0}})]+M_{\max}(C^{k_{2}} P_{3})[\prod_{j=0,1,4}M_{\max}(C^{k_{j}}L)\\
&&\times M_{\max}(C^{k_{3}})+\prod_{j=1,3,4}M_{\max}(C^{k_{j}}L)M_{\max}(C^{k_{0}})]+\prod^{2}_{j=0}M_{\max}(C^{k_{j}}L)\prod^{4}_{j=3}M_{\max}(C^{k_{j}})\\
&&+\prod_{j=0,3}M_{\max}(C^{k_{j}})\prod_{j=1,2,4}M_{\max}(C^{k_{j}}L)+\prod_{j=0,4}M_{\max}(C^{k_{j}})\prod^{3}_{j=1}M_{\max}(C^{k_{j}}L)\\
&&+M_{\max}(C^{k_{0}}_{*})\prod^{4}_{j=1}M_{\max}(C^{k_{j}}L).\\
\end{eqnarray*}
So we have the result of formula (\ref{equ34}) by Lemma \ref{art31} and \ref{art33}, where
\begin{equation*}
k_{0}=max\{0, \lfloor\frac{n-37}{35}\rfloor\},
k_{j}=
\begin{cases}
\lfloor\frac{n-2+7j}{35}\rfloor & \text{if $n\geq 37$};\\
\lfloor\frac{n-9+7j}{35}\rfloor & \text{if $n\leq 30$}.
\end{cases}
 \end{equation*}
 and $j\in \{1,2,3,4\}$.

(7) If $n\equiv 6(mod~7)$ and $n\geq 27, n\neq 34$, by formula (\ref{equ23}), we have $M_{\max}(T_{27}^{*})=5832$, $M_{\max}(T_{41}^{*})=597861$, $M_{\max}(T_{48}^{*})=6052320$, $M_{\max}(T_{55}^{*})=61268400$, $M_{\max}(T_{62}^{*})=620136000$, $M_{\max}(T_{69}^{*})=6276690000$. When $n\geq 76$, by Lemma \ref{art32}, we have trees  $T^{11}$ and $T^{12}$ from $T_{n}^{*}-uv$, and we get $C^{k_{1}}F$, $C^{k_{3}}F$, $C^{k_{5}}F$ and the tree $T^{13}$ from $T_{n}^{*}-u-v$. The $C^{k_{0}}L$ and $T^{14}$ are obtained from $T^{12}-u^{'}v^{'}$, and we can obtain $C^{k_{0}}$, $C^{k_{2}}F$, $C^{k_{4}}F$, $C^{k_{6}}F$ from $T^{12}-u^{'}-v^{'}$. Then we have $C^{k_{0}}$ and $T^{14}$ from $T^{13}-u^{''}v^{''}$, the subgraphs $C^{k_{0}}_{*}$, $C^{k_{2}}F$, $C^{k_{4}}F$, $C^{k_{6}}F$ are derived from $T^{13}-u^{''}-v^{''}$. Since $T^{11}, T^{14}$ have the same structure of $T^{1}$, i.e. they have the same recurrences, we have
\begin{eqnarray*}
 &&M_{\max}(T_{n}^{*})\nonumber\\
 &=&[M_{\max}(C^{k_{1}}F)(M_{\max}(C^{k_{3}}F)M_{\max}(C^{k_{5}}F L)+M_{\max}(C^{k_{3}}F-L)M_{\max}(C^{k_{5}}F))\\
&& +M_{\max}(C^{k_{1}}F-1)M_{\max}(C^{k_{3}}F)M_{\max}(C^{k_{5}}F)]\{M_{\max}(C^{k_{0}}L)[M_{\max}(C^{k_{6}}F)\\
&&\times(M_{\max}(C^{k_{2}}FL) M_{m}(C^{k_{4}}F)+M_{\max}(C^{k_{2}}F)M_{\max}(C^{k_{4}}F-L))+M_{\max}(C^{k_{2}}F)\\
&&\times M_{\max}(C^{k_{4}}F)M_{\max}(C^{k_{6}}F-1)]+M_{\max}(C^{k_{0}})M_{\max}(C^{k_{2}}F)M_{\max}(C^{k_{4}}F)\\
&&\times M_{\max}(C^{k_{6}}F)\}+M_{\max}(C^{k_{1}}F)M_{\max}(C^{k_{3}}F)M_{\max}(C^{k_{5}}F)\{M_{\max}(C^{k_{0}})\\
&&\times [M_{\max}(C^{k_{6}}F)(M_{\max}(C^{k_{2}}FL)M_{\max}(C^{k_{4}}F)+M_{\max}(C^{k_{2}}F)M_{\max}(C^{k_{4}}F-L))\\
&&+M_{\max}(C^{k_{2}}F)M_{\max}(C^{k_{4}}F)M_{\max}(C^{k_{6}}F-1)]+M_{\max}(C^{k_{0}}_{*})M_{\max}(C^{k_{2}}F)\\
&&\times M_{\max}(C^{k_{4}}F)M_{\max}(C^{k_{6}}F)\}\\
&=&M_{\max}(C^{k_{0}}L)\{\prod_{j=1,3,4,6}M_{\max}(C^{k_{j}}F)\prod_{j=2,5}M_{\max}(C^{k_{j}}FL)+M_{\max}(C^{k_{2}}F L)\\
&&\times[\prod_{j=1,4,5,6}M_{\max}(C^{k_{j}}F)M_{\max}(C^{k_{3}}F-L)+\prod^{6}_{j=3}M_{\max}(C^{k_{j}}F)M_{\max}(C^{k_{1}}F-L)]\\
&&+M_{\max}(C^{k_{5}}F L)[\prod_{j=1,2,3,6}M_{\max}(C^{k_{j}}F)M_{\max}(C^{k_{4}}F-L)+\prod^{4}_{j=1}M_{\max}(C^{k_{j}}F)\\
&&\times M_{\max}(C^{k_{6}}F-L)]+\prod_{j=1,2,5,6}M_{\max}(C^{k_{j}}F)\prod^{4}_{j=3}M_{\max}(C^{k_{j}}F-L)\\
&&+\prod_{j=1,2,4,5} M_{\max}(C^{k_{j}}F)\prod_{j=3,6}M_{\max}(C^{k_{j}}F-L)+\prod_{j=2,3,5,6}M_{\max}(C^{k_{j}}F)\\
&&\times\prod_{j=1,4}M_{\max}(C^{k_{j}}F-L)+\prod^{5}_{j=2}M_{\max}(C^{k_{j}}F)\prod_{j=1,6}M_{\max}(C^{k_{j}}F-L)\}+M_{\max}(C^{k_{0}})\\
&&\times[\prod_{j=1,2,3,4,6}M_{\max}(C^{k_{j}}F)M_{\max}(C^{k_{5}}FL)+\prod_{j=1,2,4,5,6}M_{\max}(C^{k_{j}}F)M_{\max}(C^{k_{3}}F-L)\\
&&+\prod^{6}_{j=2}M_{\max}(C^{k_{j}}F)M_{\max}(C^{k_{1}}F-L)+\prod_{j=1,3,4,5,6}M_{\max}(C^{k_{j}}F)M_{\max}(C^{k_{2}}F L)\\
&&+\prod_{j=1,2,3,5,6}M_{\max}(C^{k_{j}}F)M_{\max}(C^{k_{4}}F-L)+\prod^{5}_{j=1}M_{\max}(C^{k_{j}}F)M_{\max}(C^{k_{6}}F-L)]\\
&&+\prod^{6}_{j=1}M_{\max}(C^{k_{j}}F)M_{\max}(C^{k_{0}}_{*}).
\end{eqnarray*}
Hence, by Lemmas \ref{art31} and \ref{art33}, we derive the formula (\ref{equ35}), where $k_{j}=\lfloor\frac{n-27+7j}{49}\rfloor, j\in\{0,1,2\}$.
\end{proof}

Besides, if $n\in\{6,10,13,20,34\}$, by formula (\ref{equ23}), it is easy to get that $M_{\max}(T_{1}^{*})=M_{\max}(T_{2}^{*})=M_{\max}(T_{3}^{*})=1, M_{\max}(T_{6,1}^{*})=M_{\max}(T_{6,2}^{*})=5, M_{\max}(T_{10}^{*})=21, M_{\max}(T_{13}^{*})=56, M_{\max}(T_{20}^{*})=571, M_{\max}(T_{34,1}^{*})=M_{\max}(T_{34,2}^{*})=59049$.

Next, we will give two examples.

{\bf Example: }If the tree with $n=72$, it satisfies the condition $n\equiv 2(mod~7)$ and $n\geq 9$, by calculating we have $k_{0}=1, k_{1}=k_{2}=k_{3}=k_{4}=2$. See Figure \ref{fig4}(a).
\begin{figure}[htbp]
   \centering
   \subfigure[$n=72$]{
       \begin{minipage}[t]{0.3\linewidth}
          \centering
          \includegraphics[scale=0.5]{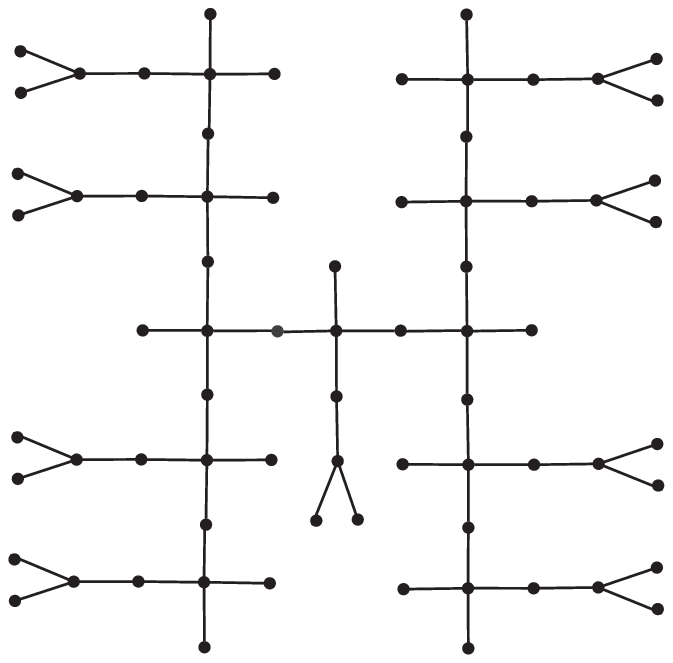}\\

     \end{minipage}
     }
     \subfigure[$n=76$]{
       \begin{minipage}[t]{0.33\linewidth}
          \centering
          \includegraphics[scale=0.5]{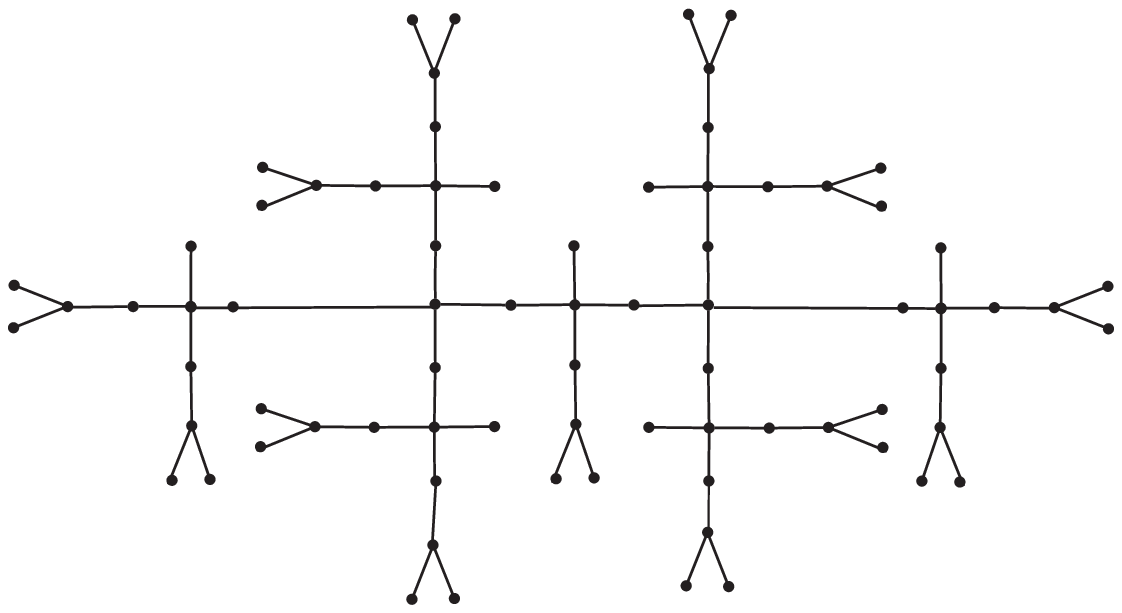}\\

     \end{minipage}
     }
     \centering
     \caption{The structure of two trees.}
     \vspace{-0.2cm}
     \label{fig4}
\end{figure}
 By Theorem \ref{art34} we have
\begin{eqnarray*}
M_{\max}(T_{72}^{*})&=&11\times112^{2}\times191^{2}+[191\times(11\times112^{2}\times79+112^{2}\times8)]\times2+11\times112^{2}\times79^{2}\\
&&+(8\times79\times112^{3})\times2+5\times112^{4}=16915082240.
\end{eqnarray*}
When the tree with order $n=76$, clearly, it satisfies the condition $n\equiv 6(mod~7)$ and $n\geq 27, n\neq 34$, so $k_{0}=k_{1}=k_{2}=k_{3}=k_{4}=k_{5}=k_{6}=1$. See Figure \ref{fig4}(b). By Theorem \ref{art34} we can obtain that
\begin{eqnarray*}
M_{\max}(T_{76}^{*}))&=&11\times [(30^{4}\times 21^{2})\times5+21\times(30^{4}\times21)\times2]+8\times(30^{4}\times21^{2})\times6+5\times30^{6}\\
&=&63503190000.
\end{eqnarray*}

\noindent{\bf Conflicts of Interest}

 The authors declare no conflict of interest.

\noindent{\bf Founding}\\
\indent This research is supported by the National Natural Science
Foundation of China (No. 12261071), the Natural Science Foundation of Qinghai Province (No. 2020-ZJ-920).


\end{document}